\newtheorem{definition}{Definition}
\newtheorem{lem}{Lemma}
\newtheorem{thm}{Theorem}
\newtheorem{corollary}{Corollary}
\newtheorem{prop}{Proposition}
\newtheorem{remark}{Remark}
\newcommand{\N}{\mathbb{N}}
\newcommand{\R}{\mathbb{R}}
\newcommand{\Z}{\mathbb{Z}}
\newcommand{\supp}{\mbox{supp }}
\begin{document}

\title{On Caffarelli-Kohn-Nirenberg inequalities for block-radial functions\thanks{The research of L. Skrzypczak was  supported in part by Wenner-Gren Foundations and by the  grant  UMO-2014/15/B/ST1/00164 of the National Center of Science, Poland.}
}

%----------Author 1
\author{Leszek Skrzypczak\\
	Faculty of Mathematics and Computer Science\\
	Adam Mickiewicz University,\\ 
	ul. Umultowska 87,\\
	61-879 Pozna\'n, Poland\\
	and\\
	Cyril Tintarev\\Uppsala University\\
	P.O.Box 480\\
	SE-751 06 Uppsala, Sweden}

\date{}
% The correct dates will be entered by the editor

\maketitle

\begin{abstract}
The paper provides weighted Sobolev inequalities of the Caffa\-re\-lli-Kohn-Nirenberg type for functions with multi-radial symmetry. An elementary example of such inequality is the following inequality of Hardy type for functions $u=u(r_1(x),r_2(x))$, where $r_1(x)=\sqrt{x_1^2+x_2^2}$ and $r_2(x)=\sqrt{x_3^2+x_4^2}$ from the subspace $\dot{H}^{1,2}_{(2,2)}(\R^4)$ 
of the Sobolev space $\dot{H}^{1,2}(\R^4)$, radially symmetric in variables $(x_1,x_2)$ and in variables $(x_3,x_4)$:
$$
\int_{\R^4}\frac{u^2}{r_1(x)r_2(x)}dx\le C\int_{\R^4}|\nabla u|^2 dx,
$$
Similarly to the previously studied radial case, the range of parameters in CKN inequalities can be extended, sometimes to infinity, providing a pointwise estimate similar to the radial estimate in \cite{Strauss}. Furthermore, the ''multi-radial`` weights are a stronger singularity than radial weights of the same homogeneity, e.g. $ \frac{1}{r_1(x)r_2(x)}\ge \frac{1}{2|x|^2}$. 
%\keywords{CKN inequalities \and Sobolev embeddings \and Hardy inequalities \and block-radial symmetry \and concentration compactness}
% \PACS{PACS code1 \and PACS code2 \and more}
% \subclass{46E35 \and MSC 46B50 \and 46N20}
\end{abstract}
%\subjclass[2010]{Primary 46E35; Secondary 46B50, 46N20}
%\keywords{CKN inequalities, Sobolev embeddings, Hardy inequalities, block-radial symmetry, concentration compactness}

%%%%%%%%%%%%%%%%%%%%%%%%%%%%%%%%%%%%%%%%%%%%%%%%%%%%%%%%%%%%%%%%%%%%%%%%%%%%%%%%%%%%%%%%%%%%%%%%%%%%%%%%%%%%%%%%%%%%%%%%%%%%%%%%%%%%%%%%

%\smallskip{}

\section{\label{intro}Introduction.}

\subsection{Subject of the paper}

Let $N>1$. A function $f$ on $\R^{N}$ is called block-radial, or multi-radial, or $m$-radial, $m>1$,
if $\R^{N}$ is considered as a product $\R^{\gamma_{1}}\times\dots\times\R^{\gamma_{m}}$,
$m\in\N$, and $f=f(r_{1},\dots,r_{m})$, where 
$$r_{j}=r_{j}(x)=\left(x_{\gamma_{1}+\dots+\gamma_{j-1}+1}^{2}+\dots+x_{\gamma_{1}+\dots+\gamma_{j-1}+\gamma_{j}}^{2}\right)^{1/2}.$$
In other words, we divide $\R^{N}$ into $m$ ``blocks'' of variables
$\R^{\gamma_{j}}$ and consider functions dependent on radii of those
blocks. If $m=1$, the function $f$ is called radial, and the condition
of block-radiality trivializes %to the  central inversion though the origin %is vacuous 
if $m=N$. For mnemonic reasons we will
often replace the notation $N$ with $|\gamma|:=\gamma_{1}+\dots+\gamma_{m}$. 

This paper studies embeddings of block-radial subspaces of Sobolev
spaces $\dot{H}^{1,q}$ into weighted $L^{q}$-spaces. As it transpires from Corollary~\ref{thm:4} below, an appropriate weight that estimates singularities  of block-radial functions near the hyperplanes $r_j(x)=0$, $j=1,\ldots ,m$, as well as their decay at infinity, has the form of a suitable power of the following function of homogeneity $1$:
\begin{equation}
	r_\gamma(x)\, =\,  r_{1}(x)^{\frac{\gamma_{1}-1}{|\gamma| - m}} r_{2}(x)^{\frac{\gamma_{2}-1}{|\gamma| - m}} \dots r_{m}(x)^{\frac{\gamma_{m}-1}{|\gamma| - m}}\, , \qquad x\in \R^N . \label{weight}
\end{equation}
Inequalities that express such embeddings are a part of the large family of Caffarelli-Kohn-Nirenberg
(CKN) inequalities \cite{CKN}, which includes the sharp Sobolev (Mazya-Talenti-Aubin)
inequality(for $q<N$) \cite{Aubin,Mazya-book,Talenti} and the $N$-dimensional %radial 
Hardy inequality 
\begin{equation}
\int_{\R^{N}}\frac{|u(x)|^{q}}{|x|^{q}}dx\le\left(\frac{q}{|N-q|}\right)^{q}\int_{\R^{N}}|\nabla u|^{q}dx,\; u\in C_{0}^{\infty}(\R^{N}\setminus\{0\}) %,\, 1\le p<\infty,
\label{eq:Hardy}
\end{equation}
$1\le q<\infty$ and $q\not= N$, cf. \cite[page 41]{Mazya-book}, where the vanishing of the functions at zero is in fact not required
for $q<N$. More generally if $N=m+n$, $m,n\in \N$, and $x=(y,z)$, $y\in \R^n$, $z\in\R^m$ then  
\begin{equation}
\int_{\R^{N}}\frac{|u(x)|^{q}}{|z|^{q}}dx\le\left(\frac{q}{|m-q|}\right)^{q}\int_{\R^{N}}|\nabla u(x)|^{q}dx,\; u\in C_{0}^{\infty}(\R^{N})%\setminus\{0\}) %,\, 1\le p<\infty,
\label{eq:Hardy2}
\end{equation}
if $q\not= m$ and $u(y,0)=0$ for any $y\in \R^n$, or if $q>m$, cf. \cite[page 42]{Mazya-book}.

Interpolation between Hardy and Sobolev inequalities gives
immediately embeddings into weighted $L^{p}$-spaces with $q<p<q^{*}=\frac{qN}{N-q}$.
For the radial functions, however, the interpolated Hardy-Sobolev
inequalities also allow an extrapolation, with parameter $p$ taking
all values between $q$ and $\infty$ due to the Strauss-type  estimate
\begin{equation}
\text{\ensuremath{\sup}}_{x\in\R^{N}}|x|^{N-q}|u(|x|)|^{q}\le C\int_{\R^{N}}|\nabla u|^{q}dx,\label{eq:Strauss}
\end{equation}
$ 1\le q < N$, cf. \cite{Strauss,Lio82}. For more general inequalities for radial functions  we refer to, \cite{CO,SS,SSV,DeNapoli}.

In the $m$-radial case we deal with functions of $m$ variables,
and if $m\ge q$, there are no similar weighted $L^{\infty}$- bounds.
Instead, one expects embeddings of the $m$-radial subspaces of $\dot{H}^{1,q}$
into suitably weighted $L^{\frac{qm}{m-q}}$-spaces, with the ``supercritical''
exponent $q_{m}^{*}:=\frac{qm}{m-q}>q^{*}$. As we show, it is possible
to reduce this question to a ``block-radial'' analogue of the Hardy
inequality, which in many cases follows from the standard
Hardy inequality with help of the  H\"older inequality, giving the inequality \eqref{eq:newHardy1-3} below with the constant \eqref{easyC}. 
In addition to that we show that \eqref{eq:newHardy1-3} holds with a positive constant, not obtainable via H\"older inequality, even when the constant 
\eqref{easyC} is zero, provided that $q\ge 2$ and $1< m<|\gamma|$. In particular we have  
\[
\int_{\R^{2+1}}\frac{u^{2}}{r_{1}^2}dx\le C\int_{\R^{2+1}}|\nabla u|^{2}dx, 
\]
and 
\[
\int_{\R^{2+2}}\frac{u^{2}}{r_{1}r_{2}}dx\le C\int_{\R^{2+2}}|\nabla u|^{2}dx. 
\]

Another type of inequalities of CKN-type is the $m$-radial analogue
of (\ref{eq:Strauss}), expectable once $m<q$, since $m$-radial
functions in Sobolev spaces are continuous, with the growth or decay rate expressed by a power of $r_\gamma$, 
rather than a coarser estimate in terms of $|x|$.

Multiradial inequalities proved in this paper are scale-invariant, and thus the degree of homogeneity in the weight is uniquely determined by the exponent of the function or its gradient under the integral. In bounded domains this is not necessarily true, and we may mention in this connection an earlier result, Theorem~1.4 in \cite{fig}, which gives a similar inequality with a radial weight and biradial symmetry on a ball. This inequality is provided for an open interval of parameters, up to the endpoint value.

The objective of this paper, in addition to proving embeddings for $m$-radial subspaces
of Sobolev spaces, is to study compactness of such embeddings. A related
result for radial functions is compactness of embedding of the radial
subspace of $H^{1,q}$ into $L^{p}$, $q<p<q^{*}=\frac{qN}{N-q}$.
Here we prove compactness of embeddings of $m$-radial Sobolev spaces
into $L^{p}$ including the ``supercritical'' interval $p\in(q,q_{m}^{*})$,
 and discuss the structured loss of compactness at the ``$m$-critical
exponent'' $q_{m}^{*}=\frac{qm}{m-q}$ expressed in terms of a profile
decomposition.

Already from the argument in \cite{CKN} one can infer that the range of parameters in the CKN inequalities becomes greater if one restricts the class of functions to radial-symmetric ones. Radial CKN inequalities with the full range of parameters can be found in \cite{DeNapoli}. For this reason we restrict the consideration throughout the paper to $m$-radial functions with $2\le m<N$.  In \cite{Ding} Ding has described sufficient
conditions for compactness of embeddings for block-radial subspaces of Sobolev spaces similar to Strauss result on compactness. Block-radial symmetry is a natural conditions in the variational study of elliptic PDE, as a means to obtain non-radial solutions, while at the same time enjoying benefits of regularity that arises in problems in lower dimensions, allowing, in particular, higher rate of growth for nonlinear terms. We will not quote here numerous literature studying H\'enon's equation, which describes an model in astrophysics and has a nonlinearity with supercritical growth. Breaking of the radial symmetry of solutions in variational problems can be verified in some cases by comparing energy levels of functionals with radial versus $m$-radial symmetry. As examples of studies of problems with block-radial symmetry one may give \cite{AT1,AT2,Ku,PK}.

\subsection{Block-radial subspaces - definitions and notations.}

Let $m\in{1,\ldots ,N}$ and let $\gamma\in\N^{m}$ be  an $m$-tuple  $\gamma=(\gamma_1,\ldots, \gamma_m)$,  $\gamma_1+\ldots + \gamma_m = |\gamma|= N$. The $m$-tuple  $\gamma$ describes decomposition of $\R^{|\gamma|}= \R^{\gamma_{1}}\times\dots\times\R^{\gamma_{m}}$  into $m$ subspaces of dimensions $\gamma_{1},\dots,\gamma_{m}$ respectively.  Let 
$$SO(\gamma)= SO(\gamma_1)\times \ldots SO(\gamma_m)\subset SO(N)$$ 
be a group of isometries on $\R^{|\gamma|}$. An element $g=(g_1,\ldots, g_m)$, $g_i\in SO(\gamma_i)$  acts  on $x=(x_1,\ldots, x_m)$, $x_i\in \R^{\gamma_i}$ by $x\mapsto g(x)= (g_1(x_1),\ldots, g_m(x_m))$.  If $m=1$ then $SO(\gamma)= SO(N)$ is a special orthogonal group acting on $\R^N$. If $m=N$ then the group  is trivial  since then $\gamma_1= \ldots =\gamma_m = 1$ and $SO(1)= \{\mathrm{id}\}$.  

We will denote a subspace of any space $X$ of functions on $\R^{|\gamma|}$
consisting of functions invariant with respect to the action the group $SO(\gamma)$ as $X_{\gamma}$. If $SO(\gamma)=SO(N)$, then we will write $X_\mathrm{rad}$ since in that case the subspace consists of radial functions.  

The spaces of our concern here are homogeneous Sobolev spaces of invariant functions $\dot{H}_\gamma^{1,p}(\R^{|\gamma|})$ defined as the completion of 
$C_{0,\gamma}^{\infty}(\R^{|\gamma|})$ in the gradient norm $\|\nabla u\|_{p}$.  The space $\dot{H}_\gamma^{1,p}(\R^{|\gamma|})$ can be identified with the subspaces of  homogeneous Sobolev spaces  $\dot{H}^{1,p}(\R^{|\gamma|})$ defined as the completion of $C_0^\infty(\R^{|\gamma|})$.

Let $Y_k$ be a hyperplane in $\R^N$ of codimension $\gamma_k$ defined by $r_k=0$, and let us introduce the subset 
$$Y(\gamma)= \bigcup_{k: \gamma_k\ge 2}Y_k \subset \R^{|\gamma|}.$$ 
%We put also $Y(\gamma) = Y(\gamma, \infty)$ $Y(\gamma)$ of $\R^N$ defined in the following way 
%$Y(\gamma)= \bigcup_{k}Y_k$.  

\begin{definition}
The homogeneous Sobolev space $\dot{H}_{0,\gamma}^{1,q}(\R^{|\gamma|})$ is the completion of $C_{0,\gamma}^{\infty}(\R^{|\gamma|}\setminus Y(\gamma))$ 
in the gradient norm $\|\nabla u\|_{q}$. 
\end{definition}
In fact, if $1<q\le\min\{\gamma_k:\gamma_k\ge 2, k=1,\dots,m\}$ and $1<m<|\gamma|$, then $\dot{H}_{0,\gamma}^{1,q}(\R^{|\gamma|})=\dot{H}_{\gamma}^{1,q}(\R^{|\gamma|})$, see for details the proof of Theorem \ref{thm:1}. 

In the case under study, $1<m<N$, as we show below (Lemma \ref{lem:4} and Lemma \ref{lem:5}), the subspaces of $\dot{H}_{0,\gamma}^{1,q}(\R^{|\gamma|})$
%with $\gamma_{1}=\dots=\gamma_{m}=p>1$ 
are still  spaces of functions. The larger spaces
 $\dot{H}_{\gamma}^{1,q}(\R^{|\gamma|})$ remain spaces of functions  whenever $1\le q < N$ (see e.g.\cite{Mazya-book}, p. 696).

%Finally, $\dot{H}_{0\gamma}^{1,\infty}(\R^{|\gamma|})$
%will denote the space of all Lipschitz functions vanishing on the
%set $Y(\gamma)$. 

\subsection{Organization of the paper and main results }

In Section 2 we prove preliminary estimates, including Lemma \ref{lem:2} with
a CKN-type inequality not contained in the main results of Section
3. 
In Section 3 we prove the following inequality of Hardy type.
%%%%%%%%%%%%%%%%%%%%%%%%%%%%%%%%%%%%%%%%%%%%%%%%%%
\begin{thm} \label{thm:2}
Let $1<m<N$ and $1\le q<\infty$. %and  $\max\{\gamma_i:\, i=1,\dots , n\}\ge 2$. 
Then there exists a constant $C>0$ such that for all 
 $u\in C_{0\gamma}^{\infty}(\R^{|\gamma|}\setminus Y(\gamma))$ the following inequality holds, 
\begin{equation}
\int_{\R^{|\gamma|}} \frac{|u(x)|^{q}}{r_\gamma(x)^{q}} \le C  \int_{\R^{|\gamma|}}|\nabla u(x)|^{q}dx\, .
\label{eq:newHardy1-3}
\end{equation}
%where $r_\gamma(x)={(r_{1}(x)^{\gamma_1-1}\dots r_{m}(x)^{\gamma_m-1})^\frac{1}{|\gamma|-m}}$, 
Moreover 
\begin{equation}
\label{easyC}
 C\le 
 \begin{cases}
 \frac{q^q}{\prod_{i:\gamma_i\not=1}|q-\gamma_i|^{\frac{q(\gamma_i -1)}{|\gamma|-m}}}, &\qquad \text{if} \quad 1 \le q < 2 , \\
 C(\gamma)^q , &\qquad \text{if} \quad 2\le q < \infty .  
\end{cases} 
\end{equation}
\end{thm}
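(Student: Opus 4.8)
The plan is to reduce the multi-radial Hardy inequality \eqref{eq:newHardy1-3} to the classical Hardy inequality \eqref{eq:Hardy2} applied separately in each block of variables, and then to glue the estimates together by H\"older's inequality. Write $x=(x_1,\dots,x_m)$ with $x_i\in\R^{\gamma_i}$, and recall that $r_\gamma(x)=\prod_{i}r_i(x)^{\alpha_i}$ with $\alpha_i=\frac{\gamma_i-1}{|\gamma|-m}$, so that $\sum_i\alpha_i=1$. The key observation is that for indices $i$ with $\gamma_i=1$ the weight contributes nothing ($\alpha_i=0$), while for indices with $\gamma_i\ge 2$ the function $u$, being invariant under $SO(\gamma)$ and supported away from $Y(\gamma)$, vanishes on the hyperplane $r_i=0$; hence \eqref{eq:Hardy2} (in the form with $z=x_i\in\R^{\gamma_i}$, $y$ the remaining variables, $m$ there equal to $\gamma_i$) applies in each such block with constant $\bigl(\tfrac{q}{|q-\gamma_i|}\bigr)^q$, and trivially so (with a constant that can be taken to be anything, in particular absorbed) when $\gamma_i=1$ since then $\alpha_i=0$.

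Concretely, I would first prove the single-block estimate
\[
\int_{\R^{|\gamma|}}\frac{|u(x)|^q}{r_i(x)^q}\,dx\ \le\ \left(\frac{q}{|q-\gamma_i|}\right)^q\int_{\R^{|\gamma|}}|\nabla u(x)|^q\,dx
\]
for each $i$ with $\gamma_i\ne 1$, by freezing all variables except $x_i$ and using \eqref{eq:Hardy2}; Fubini then integrates over the frozen variables, and $|\partial_{x_i}u|\le|\nabla u|$. (When $\gamma_i\ge 2$ the vanishing of $u$ on $\{x_i=0\}$ is exactly the hypothesis needed in \eqref{eq:Hardy2} for $q=\gamma_i$ and is automatic; when $q\ne\gamma_i$ and $\gamma_i\ge2$ the vanishing is still guaranteed by the support condition.) Next, writing $\frac1{r_\gamma^q}=\prod_{i:\gamma_i\ne1}\bigl(\frac1{r_i^q}\bigr)^{\alpha_i}$ and using the generalized H\"older inequality with exponents $1/\alpha_i$ summing to $1$, I get
\[
\int\frac{|u|^q}{r_\gamma^q}\,dx=\int\prod_{i:\gamma_i\ne1}\left(\frac{|u|^q}{r_i^q}\right)^{\alpha_i}dx\ \le\ \prod_{i:\gamma_i\ne1}\left(\int\frac{|u|^q}{r_i^q}\,dx\right)^{\alpha_i}\ \le\ \prod_{i:\gamma_i\ne1}\left(\frac{q}{|q-\gamma_i|}\right)^{q\alpha_i}\left(\int|\nabla u|^q\right),
\]
which is \eqref{eq:newHardy1-3} with the constant claimed in the first line of \eqref{easyC}. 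Note this argument needs $0<\alpha_i<1$ for each relevant $i$, i.e. at least two blocks with $\gamma_i\ge2$; the boundary cases (only one block with $\gamma_i\ge2$, or $q=\gamma_i$ making the single-block constant blow up) require the separate, harder argument announced in the introduction, valid for $q\ge2$, which I would carry out via an estimate on the function $v=|u|^{q/2}$ or a direct quadratic-form manipulation and which is where the genuinely new constant $C(\gamma)$ appears.

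The main obstacle is precisely the second branch of \eqref{easyC}, for $2\le q<\infty$: here one cannot rely on the H\"older splitting when some $\gamma_i=q$ (the classical single-block Hardy constant is infinite), nor when there is only one nontrivial block. The plan for that case is to exploit $q\ge 2$ to work with $w=|u|^{q/2}\in\dot H^{1,2}_\gamma$ (with $|\nabla w|\le\frac q2|u|^{q/2-1}|\nabla u|$, hence $\|\nabla w\|_2\le\frac q2\|u\|_{q}^{q/2-1}\|\nabla u\|_q$ by H\"older after noticing the exponents match), reducing to the case $q=2$, and then to prove the $q=2$ multiradial Hardy inequality by a spherical-harmonics / separation-of-variables decomposition in each block combined with a one-dimensional Hardy-type inequality in the radii $(r_1,\dots,r_m)$; the positivity of the resulting constant, even at the exceptional homogeneities, comes from the fact that the lowest spherical mode in a nontrivial block is orthogonal to the relevant obstruction, so no cancellation occurs. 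I would present the easy H\"older argument in full and then invoke (or defer to the later sections the proof of) the quadratic case to finish the general bound $C\le C(\gamma)^q$.
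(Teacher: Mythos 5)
Your first branch ($1\le q<2$) is exactly the paper's argument (Lemma~\ref{lem:5}): single-block Hardy in each $\R^{\gamma_i}$ followed by generalized H\"older with exponents $1/\alpha_i$, and since $q<2\le\gamma_i$ the constants $|q-\gamma_i|^{-1}$ are always finite there. (Your caveat that this needs two nontrivial blocks is unnecessary: with one block $\gamma_i\ge 2$ one has $\alpha_i=1$ and the H\"older step is vacuous.) The real issue is the branch $2\le q<\infty$, which you only sketch, and the sketch as written would not work.

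The flaw is in the reduction to $q=2$ via $w=|u|^{q/2}$. First, the unweighted $q=2$ inequality applied to $w$ controls $\int |u|^q/r_\gamma^{2}$, not the desired $\int |u|^q/r_\gamma^{q}$: substituting $w=|u|^{q/2}$ changes the power of $u$ but leaves the weight exponent at $2$, so you need a whole \emph{family} of weighted $L^2$ Hardy inequalities, $\int |\nabla v|^2 r_\gamma^{-\alpha}\ge C\int |v|^2 r_\gamma^{-\alpha-2}$ with $\alpha=q-2$ (this is the paper's Lemma~\ref{lem:newHardy}), not just the $\alpha=0$ case. Second, your bound $\|\nabla w\|_2\le\frac q2\|u\|_q^{q/2-1}\|\nabla u\|_q$ introduces the uncontrolled factor $\|u\|_q$, which is not dominated by $\|\nabla u\|_q$ in the homogeneous space; the correct procedure (the paper's Lemma~\ref{lem:2}) keeps the weights inside the H\"older step, splitting $r_\gamma^{-(q-2)}=r_\gamma^{-q\frac{q-2}{q}}\cdot 1$ so that the factor $\bigl(\int |u|^q/r_\gamma^{q}\bigr)^{1-2/q}$ reappears on the right and can be absorbed into the left-hand side. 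Finally, two further pieces are missing: a concrete proof of the weighted $L^2$ inequality for two blocks (the paper does this by polar coordinates in the $(r_1,r_2)$-quadrant and a one-dimensional Hardy inequality in the angular variable; your appeal to spherical harmonics and ``orthogonality to the obstruction'' is not an argument), and the passage from two blocks to $m$ blocks for $q\ge2$, which the paper handles by factoring $r_1^{\gamma_1-1}\cdots r_j^{\gamma_j-1}$ into cyclically paired square roots and applying H\"older once more (Lemma~\ref{lem:4}). Without these, the second line of \eqref{easyC} is unproved.
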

\noindent
We recall that the weight $r_\gamma(x)$ is defined by \eqref{weight}. 
%$r_{j}(x)=\left(x_{\gamma_{1}+\dots+\gamma_{j-1}+1}^{2}+\dots+x_{\gamma_{1}+\dots+\gamma_{j-1}+\gamma_{j}}^{2}\right)^{1/2}$ for any  $1\le j\le m$.
%%%%%%%%%%%%%%%%%%%%%%%%%%%%%%%%%%

For a subset of parameters one can extend the inequality (\ref{eq:newHardy1-3})
to the space $\dot{H}^{1,q}(\R^{|\gamma|})$:
\begin{thm} \label{thm:1} 
 Assume that $1<m<N$ %, $1<q<\infty$ 
 and $1< q \le \min\{\gamma_k: \gamma_k\ge 2\}$. 
Then there exists a positive constant $C$ such that for each
$u\in\dot{H}^{1,q}(\R^{|\gamma|})$, 
\begin{equation}
\int_{\R^{|\gamma|}}\frac{|u(x)|^{q}}{r_{\gamma}(x)^q}
%(r_{1}(x)^{\gamma_{1}-1}\dots r_{m}(x)^{\gamma_{m}-1})^{\frac{q}{|\gamma|-m}}}
dx\le C \int_{\R^{|\gamma|}}|\nabla u(x)|^{q}dx.
\label{eq:newHardy1-3-1}
\end{equation}
Moreover the constant $C$ satisfies the estimates \eqref{easyC}. 
\end{thm}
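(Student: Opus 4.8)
Theorem~\ref{thm:2} already gives \eqref{eq:newHardy1-3-1}, with the constants of \eqref{easyC}, for all $u\in C_{0\gamma}^{\infty}(\R^{|\gamma|}\setminus Y(\gamma))$, so the task is to remove two restrictions on $u$: that it vanish near $Y(\gamma)$, and that it be block-radial. The hypothesis $1<q\le\min\{\gamma_k:\gamma_k\ge2\}$ is exactly what makes the first removable. The plan is: (i) a capacity/density reduction to test functions vanishing near $Y(\gamma)$; (ii) a direct H\"older--Hardy estimate for general $u$ when $q<\min\{\gamma_k:\gamma_k\ge2\}$, which also delivers the explicit constant; (iii) a block-radial rearrangement reducing the remaining endpoint case to Theorem~\ref{thm:2}.

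\emph{Step (i).} Each $Y_k$ with $\gamma_k\ge2$ is a linear subspace of codimension $\gamma_k\ge q$, so $Y(\gamma)$ has zero $q$-capacity. I would fix cutoffs $\chi_\varepsilon$ equal to $1$ off an $\varepsilon$-tube around $Y(\gamma)$ and vanishing on a thinner tube --- power cutoffs transverse to $Y_k$ when $\gamma_k>q$, logarithmic ones when $\gamma_k=q$ --- so that $\|\nabla\chi_\varepsilon\|_q\to0$ (the logarithmic cutoff works precisely because $q>1$). Multiplying a test function by $\chi_\varepsilon$, mollifying, and (in the invariant space) averaging over $SO(\gamma)$ yields density of $C_{0\gamma}^{\infty}(\R^{|\gamma|}\setminus Y(\gamma))$ in $\dot{H}_\gamma^{1,q}(\R^{|\gamma|})$ --- hence $\dot{H}_{0,\gamma}^{1,q}=\dot{H}_\gamma^{1,q}$, as asserted just before the theorem --- and density of $C_0^{\infty}(\R^{|\gamma|}\setminus Y(\gamma))$ in $\dot{H}^{1,q}(\R^{|\gamma|})$. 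Passing to an a.e.\ convergent subsequence of approximants, Fatou's lemma makes the left side of \eqref{eq:newHardy1-3-1} lower semicontinuous along $\dot{H}^{1,q}$-convergent sequences, so it suffices to prove \eqref{eq:newHardy1-3-1}, with the same constant, for $u\in C_0^{\infty}(\R^{|\gamma|}\setminus Y(\gamma))$.

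\emph{Steps (ii)--(iii).} Put $\theta_j=\frac{\gamma_j-1}{|\gamma|-m}$, so $\sum_{j:\gamma_j\ge2}\theta_j=1$ and, by \eqref{weight}, $r_\gamma(x)^{-q}=\prod_{j:\gamma_j\ge2}\bigl(r_j(x)^{-q}\bigr)^{\theta_j}$. When $q<\gamma_j$ for every $j$ with $\gamma_j\ge2$ (in particular when $1<q<2$), the generalized H\"older inequality with exponents $1/\theta_j$ gives $\int r_\gamma^{-q}|u|^q\,dx\le\prod_{j:\gamma_j\ge2}\bigl(\int r_j^{-q}|u|^q\,dx\bigr)^{\theta_j}$, and since $u$ vanishes near $Y_j\subset Y(\gamma)$ the $N$-dimensional Hardy inequality \eqref{eq:Hardy2} in the $j$-th block bounds the $j$-th factor by $\bigl(\tfrac{q}{\gamma_j-q}\bigr)^{q}\|\nabla u\|_q^q$; the product, with $\sum\theta_j=1$, is the first branch of \eqref{easyC}. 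In the remaining endpoint case $q=\gamma_j=\min\{\gamma_k:\gamma_k\ge2\}\ge2$ --- and, more generally, to secure the constant $C(\gamma)^q$ whenever $q\ge2$ --- I would instead exploit that $r_\gamma^{-q}=\prod_{j:\gamma_j\ge2}|z_j|^{-q\theta_j}$ is radially non-increasing in each block $z_j\in\R^{\gamma_j}$: let $u^\ast$ be the result of successive Steiner (spherical) symmetrizations of $|u|$ in the blocks with $\gamma_j\ge2$; then $u^\ast$ is $SO(\gamma)$-invariant, the Hardy--Littlewood inequality gives $\int r_\gamma^{-q}|u|^q\,dx\le\int r_\gamma^{-q}|u^\ast|^q\,dx$, and the P\'olya--Szeg\H{o} inequality gives $\|\nabla u^\ast\|_q\le\|\nabla u\|_q$, so Theorem~\ref{thm:2} (extended to $\dot{H}_\gamma^{1,q}$ via Step~(i)) applied to $u^\ast$ closes the estimate with $C=C(\gamma)^q$.

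\emph{The main obstacle} is the endpoint $q=\gamma_j$, permitted when $\gamma_j=\min\{\gamma_k:\gamma_k\ge2\}$: there the per-block Hardy constant in Step~(ii) blows up, so one cannot avoid invoking the finer constant $C(\gamma)$ from Theorem~\ref{thm:2} together with the rearrangement argument; simultaneously the capacity cutoff in Step~(i) becomes borderline and must be taken logarithmic; and one must confirm that $\int r_\gamma^{-q}|u|^q$ remains finite for general (not necessarily vanishing) $u$, which relies on $q\theta_j<\gamma_j$ and is tight precisely when there is a single block with $\gamma_j\ge2$ (so $\theta_j=1$). Making every step in (i) and (iii) survive at this endpoint is the heart of the proof.
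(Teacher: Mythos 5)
Your proposal follows essentially the same route as the paper: a capacity-based density of $C_{0\gamma}^{\infty}(\R^{|\gamma|}\setminus Y(\gamma))$ under the hypothesis $1<q\le\min\{\gamma_k:\gamma_k\ge2\}$, followed by an iterated block-wise spherical rearrangement combined with the Hardy--Littlewood and P\'olya--Szeg\H{o} inequalities to reduce a general $u\in\dot{H}^{1,q}(\R^{|\gamma|})$ to a block-radial $u^\star$ to which Theorem~\ref{thm:2} applies. The only differences are minor: the paper proves density by a Hahn--Banach duality argument using Maz'ya's capacity and Hausdorff-measure criteria rather than explicit cutoffs, and it does not need your separate H\"older--Hardy branch for $q<\min\{\gamma_k:\gamma_k\ge2\}$, since the constant in \eqref{easyC} for $q<2$ is already supplied through Theorem~\ref{thm:2} via Lemma~\ref{lem:5}.
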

%%%%%%%%%%%%%%%%%%%

We prove also the following inequality of CKN type, which
extends the range of Sobolev embeddings beyond the critical exponent
$q^{*}=\frac{q|\gamma|}{|\gamma|-q}$:
\begin{thm} \label{thm:CKN} 
Let $1<m<N$, $1\le q<\infty$, $q\le p<\infty$,
and let $p\le q_{m}^{*}:=\frac{qm}{m-q}$ whenever $q<m$. %Assume also that $\gamma_{i}\neq1$ whenever $q=1$. 
Then there exists a
constant $C>0$, uniform with respect to $q\ge 2$,  such that for every $u\in\dot{H}_{0,\gamma}^{1,q}(\R^{|\gamma|})$,
\begin{equation}
\left(\int_{\R^{|\gamma|}}\left(\frac{|u(x)|}{r_\gamma(x)^{|\gamma|(\frac{1}{p}-\frac{1}{q})+1}}\right)^p
%{(r_{1}(x)^{\gamma_{1}-1}\dots r_{m}(x)^{\gamma_{m}-1})^{\frac{|\gamma|\frac{q-p}{q}+p}{|\gamma|-m}}}
dx\right)^{q/p}\le C\int_{\R^{|\gamma|}}|\nabla u(x)|^{q}dx.\label{eq:CKN}
\end{equation}
If $q\not= m $ then the constant $C>0$ is independent of $p$. 

If $q <  \min\{\gamma_k: \gamma_k\ge 2\}$ then the inequality \eqref{eq:CKN} holds for any $u\in\dot{H}_\gamma^{1,q}(\R^{|\gamma|})$.  Moreover if $q <  \min\{\gamma_k: \gamma_k\ge 2\}$ and $p<q^*$ 
then the inequality \eqref{eq:CKN} holds for any $u\in\dot{H}^{1,q}(\R^{|\gamma|})$. 
\end{thm}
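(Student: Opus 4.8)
The plan is to deduce Theorem \ref{thm:CKN} by interpolating between the block-radial Hardy inequality \eqref{eq:newHardy1-3} of Theorem \ref{thm:2} and a Strauss-type $L^\infty$ bound, and then bootstrapping the class of admissible functions using Theorem \ref{thm:1}. First I would dispose of the endpoint cases. When $p=q$, \eqref{eq:CKN} is exactly \eqref{eq:newHardy1-3}, so there is nothing to prove beyond quoting Theorem \ref{thm:2}. When $q<m$ and $p=q_m^*=\frac{qm}{m-q}$, the exponent of $r_\gamma$ in \eqref{eq:CKN} becomes $|\gamma|(\frac1p-\frac1q)+1 = -\frac{|\gamma|}{m}+1+\dots$; here I expect the cleanest route is to change variables to the $m$ block-radii, writing the integral as an integral over $\R_+^m$ against the Jacobian $\prod_j r_j^{\gamma_j-1}$, so that $r_\gamma^{|\gamma|-m}\,\prod_j r_j^{\gamma_j-1}$-type weights combine to reduce the inequality to an $m$-dimensional radial CKN / Sobolev inequality for the function $u$ viewed as a function of $(r_1,\dots,r_m)\in\R^m$, for which the critical exponent is precisely $q_m^*$. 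The Hardy inequality \eqref{eq:newHardy1-3} controls the corresponding weighted $L^q$-norm in those coordinates, and a one-dimensional-per-block fundamental-theorem-of-calculus argument (integrating $\partial_{r_j}|u|^q$ against the appropriate weight, exactly as in the proof of \eqref{eq:Strauss}) gives the pointwise bound needed at the other end.

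The core step is the interpolation itself. For $q<p$ in the admissible range I would write, pointwise,
\[
\frac{|u(x)|^p}{r_\gamma(x)^{p(|\gamma|(\frac1p-\frac1q)+1)}}
= \left(\frac{|u(x)|^q}{r_\gamma(x)^q}\right)\cdot |u(x)|^{p-q}\,r_\gamma(x)^{-( \dots )},
\]
where the remaining power of $r_\gamma$ and of $|u|$ are chosen so that the second factor is controlled by a uniform $L^\infty$ bound of the form $\sup_x r_\gamma(x)^{\,|\gamma|-m}\,|u(x)|^q \le C\|\nabla u\|_q^q$ (valid when $q>m$) or, when $q\le m$, is absorbed by raising \eqref{eq:newHardy1-3} to a suitable power and invoking the $m$-dimensional Sobolev embedding in the block-radii variables to cover the full interval $q<p\le q_m^*$. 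Integrating and applying Hölder with exponents tuned to split off the $L^q$-Hardy factor from the $L^\infty$ (or $L^{q_m^*}$) factor yields \eqref{eq:CKN}; tracking constants shows $C$ can be taken independent of $p$ precisely when $q\neq m$, since the only place $p$ enters the constant is through a Hölder conjugate pair that degenerates as $p\to\infty$ only in the borderline case $q=m$ where the $L^\infty$ bound fails.

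Finally I would upgrade the function class. The inequality is first proved for $u\in C_{0,\gamma}^\infty(\R^{|\gamma|}\setminus Y(\gamma))$ and extended by density to $\dot H_{0,\gamma}^{1,q}$. When $q<\min\{\gamma_k:\gamma_k\ge2\}$, the proof of Theorem \ref{thm:1} shows $\dot H_{0,\gamma}^{1,q}=\dot H_\gamma^{1,q}$ (the hyperplanes $Y_k$ have $q$-capacity zero), so \eqref{eq:CKN} extends to all of $\dot H_\gamma^{1,q}$. For the last assertion, when additionally $p<q^*$, the weight $r_\gamma^{-(|\gamma|(\frac1p-\frac1q)+1)}$ has a weak enough singularity and decay that one may approximate an arbitrary $u\in\dot H^{1,q}(\R^{|\gamma|})$ by its $SO(\gamma)$-symmetrizations or mollifications without symmetry hypothesis — more precisely, by Theorem \ref{thm:1} applied to $\dot H^{1,q}$ one already has \eqref{eq:newHardy1-3-1} on the full space, and the same interpolation argument, using the ordinary Sobolev inequality $\|u\|_{q^*}\lesssim\|\nabla u\|_q$ in place of the block-radial refinement, closes the range $q\le p<q^*$ for non-symmetric $u$.

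\textbf{Main obstacle.} The delicate point is the passage to the block-radii coordinates and the verification that the weighted integrals genuinely reduce to a clean $m$-dimensional CKN/Sobolev inequality with critical exponent $q_m^*$ — in particular, making the interpolation uniform in $p$ up to, but not including the degenerate case $q=m$, and correctly handling the boundary behaviour on $Y(\gamma)$ so that the density argument is legitimate for each claimed function class.
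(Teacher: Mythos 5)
Your interpolation scheme is a genuinely different route from the paper's, but as written it has three concrete gaps, two of which amount to assuming the hardest part of the theorem. The paper does not interpolate between endpoints at all: it passes to the block-radii variables, observes that $\omega=\{\prod_j r_j^{\gamma_j-1}\in(1,2^{|\gamma|-m})\}$ is a Lipschitz extension domain in $\R^m$, applies the \emph{unweighted} local Sobolev embedding $H^{1,q}(\omega)\hookrightarrow L^p(\omega)$ (uniform in $p$ for $q\neq m$), rescales dyadically, sums over $j\in\Z$ using that $r_\gamma\sim 2^{-j}$ on each shell, and absorbs the resulting $\int|u|^q/r_\gamma^q$ term via Theorem~\ref{thm:2}. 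This sidesteps every weighted endpoint inequality.

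The gaps in your plan: (1) For $q>m$ you invoke the bound $\sup_x r_\gamma(x)^{|\gamma|-q}|u(x)|^q\le C\|\nabla u\|_q^q$ (your exponent $|\gamma|-m$ is moreover wrong by scaling). That bound is exactly Corollary~\ref{thm:4}, which the paper \emph{derives from} Theorem~\ref{thm:CKN} by letting $p\to\infty$; using it here is circular unless you give an independent proof, and the one-dimensional fundamental-theorem-of-calculus argument behind \eqref{eq:Strauss} does not transfer to $m\ge 2$ block radii without real work. (2) For $q<m$ your upper endpoint $p=q_m^*$ does not reduce to a ``clean'' unweighted $m$-dimensional Sobolev inequality: in the variables $(r_1,\dots,r_m)$ the left side carries the weight $\bigl(\prod_j r_j^{\gamma_j-1}\bigr)^{m/(m-q)}$ while the right side carries $\prod_j r_j^{\gamma_j-1}$, i.e.\ it is a genuine CKN inequality in $\R^m$ with product weights raised to different powers on the two sides — essentially the statement being proved. (3) For $q=m$ there is no upper endpoint at all ($q_m^*=\infty$ and no sup bound), so the interpolation scheme produces nothing for large finite $p$, whereas the theorem claims the inequality for every $p<\infty$; the paper's local embedding $H^{1,m}(\omega)\hookrightarrow L^p(\omega)$ handles this case with a $p$-dependent constant. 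Your $p=q$ endpoint, the density/capacity discussion, and the final Hölder interpolation between \eqref{eq:newHardy1-3-1} on $\dot H^{1,q}$ and the ordinary Sobolev inequality for $p<q^*$ are all fine and match the paper's intent.
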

%%%%%%%%%%%%%%%

By taking $p\to\infty$ in (\ref{eq:CKN}), we have
immediately the following $m$-radial analogue of the Strauss estimate for radial functions (which corresponds formally to the case $m=1$ in the inequality below). The statement is repeated in the second part of corollary for $\dot{H}_\gamma^{1,q}(\R^{|\gamma|})$ when it coincides with $\dot{H}_{0,\gamma}^{1,q}(\R^{|\gamma|})$. 

\begin{corollary}\label{thm:4}
Let $1<m<q<|\gamma|$. 

\noindent
(i) There exists a positive constant $C(\gamma)$ such that for any $u\in\dot{H}_{0,\gamma}^{1,q}(\R^{|\gamma|})$,
\begin{equation} 
\sup_x (r_{\gamma}(x))^{|\gamma|-q}|u(x)|^q \le C(\gamma)^{q}\int_{\R^{|\gamma|}}|\nabla u(x)|^q dx.\label{eq:newHardy1-3-2}
\end{equation}
Moreover $u$ is a continuous function outside the set $Y_\gamma$. 

\noindent
(ii) If  $m< q <  \min\{\gamma_k: \gamma_k\ge 2\}$ then the inequality \eqref{eq:newHardy1-3-2} holds for any $u\in\dot{H}_\gamma^{1,q}(\R^{|\gamma|})$ and $u$ is a continuous function outside the set $Y_\gamma$.  
\end{corollary}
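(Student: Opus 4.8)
\emph{Proof plan.}
The plan is to obtain \eqref{eq:newHardy1-3-2} by letting $p\to\infty$ in the Caffarelli--Kohn--Nirenberg inequality \eqref{eq:CKN} of Theorem~\ref{thm:CKN}, and then to upgrade the resulting essential-supremum bound to the stated pointwise bound using continuity of $m$-radial Sobolev functions off the exceptional set $Y(\gamma)$. Since $1<m<q<|\gamma|$, Theorem~\ref{thm:CKN} applies for \emph{every} $p\in[q,\infty)$ --- the restriction $p\le q_m^*$ is vacuous because $q>m$ --- and, crucially, $q\ne m$, so the constant $C$ in \eqref{eq:CKN} does not depend on $p$. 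This uniformity in $p$ is exactly what makes the passage to the limit meaningful.

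Given $u\in\dot H^{1,q}_{0,\gamma}(\R^{|\gamma|})$, set
\[
g(x)\ =\ |u(x)|\,r_\gamma(x)^{\frac{|\gamma|-q}{q}},\qquad \ud\mu(x)\ =\ r_\gamma(x)^{-|\gamma|}\,\dx .
\]
Using $p\bigl(|\gamma|(\tfrac1p-\tfrac1q)+1\bigr)=|\gamma|+p\,\tfrac{q-|\gamma|}{q}$ one sees that the integrand on the left-hand side of \eqref{eq:CKN} is exactly $g(x)^{p}\,r_\gamma(x)^{-|\gamma|}$, so that \eqref{eq:CKN} reads $\|g\|_{L^{p}(\mu)}^{q}\le C\int_{\R^{|\gamma|}}|\nabla u|^{q}\,\dx$ for all $p\ge q$, with $C$ independent of $p$. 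In particular, at $p=q$ the identity $g^{q}r_\gamma^{-|\gamma|}=|u|^{q}r_\gamma^{-q}$ together with \eqref{eq:CKN} gives $\|g\|_{L^{q}(\mu)}^{q}=\int_{\R^{|\gamma|}}|u|^{q}r_\gamma^{-q}\,\dx<\infty$. Moreover $\mu$ is $\sigma$-finite on $\R^{|\gamma|}\setminus Y(\gamma)$ --- exhaust this set by the sets $\{x:\ 1/n\le r_k(x)\le n$ for every $k$ with $\gamma_k\ge2$, and $|x|\le n\}$, on each of which $r_\gamma$ is bounded above and below --- while $Y(\gamma)$ is Lebesgue-null and $r_\gamma^{-|\gamma|}$ is positive and finite off $Y(\gamma)$, so the $\mu$-essential supremum of $|g|$ equals its Lebesgue essential supremum over $\R^{|\gamma|}$. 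Since $g\in L^{q}(\mu)$, the standard fact $\lim_{p\to\infty}\|g\|_{L^{p}(\mu)}=\|g\|_{L^{\infty}(\mu)}$ applies, and letting $p\to\infty$ in the bound above yields
\[
\operatorname*{ess\,sup}_{x\in\R^{|\gamma|}}\ r_\gamma(x)^{|\gamma|-q}\,|u(x)|^{q}\ \le\ C\int_{\R^{|\gamma|}}|\nabla u(x)|^{q}\,\dx .
\]

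To replace the essential supremum by the supremum it suffices that $u$ have a representative continuous on $\R^{|\gamma|}\setminus Y(\gamma)$. Writing $u(x)=v(r_1(x),\dots,r_m(x))$ one has $|\nabla u(x)|=|\nabla v(r)|$ and $\int_{\R^{|\gamma|}}|\nabla u|^{q}\,\dx=c_\gamma\int|\nabla v(r)|^{q}\prod_{j}r_j^{\gamma_j-1}\,\ud r$. On every compact subset of $\{r:\ r_j>0$ whenever $\gamma_j\ge2\}$ the weight $\prod_{j}r_j^{\gamma_j-1}$ is bounded away from $0$ and $\infty$, hence $v\in W^{1,q}_{\mathrm{loc}}$ there; since $q>m$, Morrey's embedding $W^{1,q}(\R^{m})\hookrightarrow C^{0,1-m/q}$ makes $v$ locally H\"older continuous, so $u$ is continuous off $Y(\gamma)$ (near the hyperplanes $r_j=0$ with $\gamma_j=1$ the weight is non-degenerate and $v$ extends continuously there too). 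Combined with the previous display, this proves (i).

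For (ii), the hypothesis $m<q<\min\{\gamma_k:\gamma_k\ge2\}$ gives $\dot H^{1,q}_{0,\gamma}(\R^{|\gamma|})=\dot H^{1,q}_\gamma(\R^{|\gamma|})$, and by the final assertion of Theorem~\ref{thm:CKN} the inequality \eqref{eq:CKN} then holds for every $u\in\dot H^{1,q}_\gamma(\R^{|\gamma|})$, so the argument above applies verbatim. I expect the only genuinely technical point to be the regularity claim used in the last step --- the reduction to a weighted Sobolev space on $\R^{m}$ with weight locally bounded away from $0$ and $\infty$ off $Y(\gamma)$, combined with Morrey's inequality for $q>m$ --- everything else being a soft limiting argument; and even this regularity is of the same nature as what is already established in the lemmas preceding the theorems.
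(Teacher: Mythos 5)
Your proposal is correct and follows exactly the paper's route: the authors obtain the corollary precisely by letting $p\to\infty$ in \eqref{eq:CKN}, using that the constant there is independent of $p$ since $q\neq m$, and part (ii) by the coincidence $\dot H^{1,q}_{0,\gamma}=\dot H^{1,q}_\gamma$ for $q<\min\{\gamma_k:\gamma_k\ge 2\}$. You merely supply details the paper leaves implicit (the $L^p\to L^\infty$ limit with respect to the measure $r_\gamma^{-|\gamma|}\,\dx$ and the Morrey-type continuity off $Y(\gamma)$), and these are carried out correctly.
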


Somewhat unexpectedly, inequality (\ref{eq:newHardy1-3-2}) does not generally hold for functions in $\dot{H}_\gamma^{1,q}(\R^{|\gamma|})$, despite the fact that when $N>q>m$, elements of $\dot{H}_\gamma^{1,q}(\R^{|\gamma|})$ are continuous away from $Y(\gamma)$. The meaning of the following counterexample, proved at the end of Section 3 is that the functions in $\dot{H}_\gamma^{1,q}(\R^{|\gamma|})$ may have asymptotics near $Y(\gamma)$ incomparable to that for functions belonging to $\dot{H}_{0\gamma}^{1,q}(\R^{|\gamma|})$.

\begin{prop}
\label{counterex} Inequality (\ref{eq:newHardy1-3-2}) does not hold for the spaces  $\dot{H}_\gamma^{1,3}(\R^{4})$ with $\gamma_1=\gamma_2=2$.
\end{prop}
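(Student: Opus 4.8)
The plan is to construct an explicit function $u\in\dot H^{1,3}_\gamma(\R^4)$ with $\gamma_1=\gamma_2=2$ for which the left-hand side of \eqref{eq:newHardy1-3-2} is infinite while the Dirichlet integral on the right is finite. Here $|\gamma|=4$, $q=3$, $m=2$, so $|\gamma|-q=1$ and the weight is $r_\gamma(x)=r_1(x)^{1/2}r_2(x)^{1/2}$ (since $\frac{\gamma_i-1}{|\gamma|-m}=\frac12$). Thus the claimed sup-bound reads $\sup_x r_1(x)^{1/2}r_2(x)^{1/2}|u(x)|^3\le C\int_{\R^4}|\nabla u|^3\,dx$, i.e. $|u(x)|\lesssim (r_1 r_2)^{-1/6}$. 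The idea is to produce a block-radial $u$ that blows up near the axis $Y_1=\{r_1=0\}$ (a plane of codimension $2$) faster than $r_1^{-1/6}$, but slowly enough that $\nabla u\in L^3$ — this is possible precisely because, in $\dot H^{1,q}_\gamma$ (as opposed to $\dot H^{1,q}_{0,\gamma}$), functions are \emph{not} required to have a trace that controls their growth toward $Y(\gamma)$.

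Concretely, I would take $u=u(r_1,r_2)$ supported in a bounded region, say $r_1^2+r_2^2\le 1$, of the form $u(r_1,r_2)=\phi(r_2)\,\psi(r_1/r_2)$ or, more simply, a single-variable model $u=v(r_1)$ cut off in $r_2$. Using the coarea/polar form $\int_{\R^4}f(r_1,r_2)\,dx = (2\pi)^2\int_0^\infty\int_0^\infty f(r_1,r_2)\,r_1 r_2\,dr_1\,dr_2$, the Dirichlet integral becomes $\int\int (|\partial_{r_1}u|^2+|\partial_{r_2}u|^2)^{3/2} r_1 r_2\,dr_1\,dr_2$. For $u$ behaving like $r_1^{-\alpha}$ near $r_1=0$ with $\alpha>0$, the gradient is of order $r_1^{-\alpha-1}$ and the integrand of order $r_1^{-3\alpha-3}\cdot r_1 = r_1^{-3\alpha-2}$, which is integrable near $r_1=0$ iff $\alpha<1/3$. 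Meanwhile the weighted sup $r_1^{1/2}|u|^3\sim r_1^{1/2-3\alpha}$ is \emph{unbounded} as $r_1\to 0$ iff $\alpha>1/6$. So any exponent $\alpha\in(1/6,1/3)$ works; a clean choice is $\alpha=1/4$. One must also make $u$ a genuine element of $\dot H^{1,3}_\gamma(\R^4)$, i.e. a limit in the gradient norm of smooth $SO(2)\times SO(2)$-invariant functions — this is automatic since $r_1^{-1/4}$ times a cutoff lies in $W^{1,3}_{\mathrm{loc}}$ and can be mollified, using that it is not required to vanish on $Y_1$; alternatively truncate $u$ at level $n$ near the axis and pass to the limit, or simply exhibit $u$ and note $\nabla u\in L^3$, $u\in L^3_{\mathrm{loc}}$ suffice for membership in the completion.

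The cutoff in $r_2$ must be handled so that (a) it does not reintroduce a singularity at $r_2=0$ — e.g. multiply by a smooth $\eta(r_2)$ with $\eta\equiv 1$ for $r_2\le 1/2$ and $\eta\equiv 0$ for $r_2\ge 1$, which near $r_2=0$ contributes nothing — and (b) the cross term $|\partial_{r_2}u|^3$ stays integrable; since on the support $\partial_{r_2}u = r_1^{-\alpha}\eta'(r_2)$ is bounded by $C r_1^{-\alpha}$ and $r_1 r_2$ is bounded, $\int r_1^{-3\alpha}r_1 r_2 < \infty$ for $\alpha<2/3$, so no obstruction. Finally I would check the weighted sup diverges along the sequence $x^{(k)}$ with $r_1(x^{(k)})=1/k$, $r_2(x^{(k)})=1/2$: there $r_1^{1/2}r_2^{1/2}|u|^3 = (1/k)^{1/2}(1/2)^{1/2}\,k^{3/4} = 2^{-1/2}k^{1/4}\to\infty$, contradicting \eqref{eq:newHardy1-3-2}.

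The only delicate point — and the one I expect to be the main obstacle to write cleanly rather than conceptually — is verifying that the constructed $u$ really belongs to $\dot H^{1,3}_\gamma(\R^4)$ and not merely to some larger space: one needs $u$ to be approximable in the homogeneous gradient norm by $C^\infty_{0,\gamma}$ functions. The cleanest route is to take the explicit $u=r_1^{-1/4}\eta(r_2)\chi(r_1)$ with $\chi$ a smooth cutoff equal to $1$ near $0$ and supported in $r_1\le 1$, observe $\nabla u\in L^3(\R^4)$ and $u\in L^3_{\mathrm{loc}}$, and invoke that such $u$ lies in the completion $\dot H^{1,3}(\R^4)$ (standard, since the singular set $Y_1$ has codimension $2>3\cdot\frac13$... here one must instead argue via truncation, because codimension $2$ and $q=3$ do not give automatic approximability by functions vanishing on $Y_1$ — but that is exactly the point, we want $u\notin\dot H^{1,3}_{0,\gamma}$). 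Truncating $u_n=\min\{u,n\}$ (keeping invariance) gives $u_n\to u$ in gradient norm by dominated convergence since $|\nabla u_n|\le|\nabla u|\in L^3$, and each $u_n\in L^\infty\cap W^{1,3}$ with compact support hence in $\dot H^{1,3}_\gamma$; this places $u$ in $\dot H^{1,3}_\gamma(\R^4)$ and completes the counterexample.
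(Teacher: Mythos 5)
Your construction fails at the very first step: the exponent arithmetic for $\nabla u\in L^{3}$ is wrong, and the error is fatal rather than cosmetic. With $u\sim r_{1}^{-\alpha}$ near $Y_{1}=\{r_{1}=0\}$ and the measure $dx=c\,r_{1}r_{2}\,dr_{1}dr_{2}$, you correctly get $|\nabla u|^{3}r_{1}r_{2}\sim r_{1}^{-3\alpha-2}r_{2}$, but $\int_{0}r_{1}^{-3\alpha-2}\,dr_{1}$ converges if and only if $3\alpha+2<1$, i.e.\ $\alpha<-\tfrac13$ --- not $\alpha<\tfrac13$ as you claim. For your choice $\alpha=\tfrac14$ the integral is $\int_{0}r_{1}^{-11/4}dr_{1}=\infty$, so $\nabla u\notin L^{3}(\R^{4})$ and $u$ does not belong to $\dot{H}^{1,3}_{\gamma}(\R^{4})$; the truncation argument at the end then has nothing to truncate. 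Moreover, no repair of the exponent can save this strategy: since $q=3>\gamma_{1}=2$, for a.e.\ fixed value of the second block the slice $u(\cdot,x_{3},x_{4})$ of a $W^{1,3}_{\mathrm{loc}}(\R^{4})$ function lies in $W^{1,3}_{\mathrm{loc}}(\R^{2})$, hence by Morrey's embedding in $C^{0,1/3}_{\mathrm{loc}}$, so it is locally bounded up to $r_{1}=0$. A power blow-up transverse to $Y(\gamma)$ on a set of $r_{2}$'s of positive measure is therefore incompatible with membership in $\dot{H}^{1,3}_{\gamma}(\R^{4})$. This is exactly why the case $q>\min\{\gamma_k\}$ is delicate and excluded from Corollary~\ref{thm:4}(ii).

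The paper's proof takes a genuinely different, and in this situation unavoidable, route: it does not produce one function with infinite left-hand side in \eqref{eq:newHardy1-3-2}, but a sequence $u_{k}=\varphi_{k}(\theta)\psi_{k}(r)\in C^{\infty}_{0,\gamma}(\R^{4})$ (polar coordinates $r_{1}=r\cos\theta$, $r_{2}=r\sin\theta$ in the quadrant) with $\int|\nabla u_{k}|^{3}dx\to0$ while $\sup_{x}r_{\gamma}(x)|u_{k}(x)|^{3}$ stays bounded below. The singularity is placed in the \emph{angular} variable, $\varphi_{k}(\theta)=\alpha(k\theta)\theta^{-1/6}$, concentrating near the hyperplane $r_{2}=0$ at radius $r\approx r_{0}>1$: the factor $(\sin 2\theta)^{1/2}$ coming from the weight $r_{\gamma}=(r_{1}r_{2})^{1/2}$ exactly cancels the $\theta^{-1/2}$ of $|\varphi_{k}|^{3}$ in the sup, while the cutoff $\alpha(k\theta)$ keeps each $u_{k}$ smooth and drives the gradient norm to zero. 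If you insist on a single-function formulation you would still have to superpose such a concentrating family with disjoint supports and summable gradient norms; some version of this concentration mechanism along $Y(\gamma)$, rather than a pointwise power singularity, is what actually defeats the inequality.
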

 It is easy to adapt the argument of the counterexample to some other values of $\gamma$ and $q$. 
%%%%%%%%%%%%%%%%%%%%%%%%%%%%%%%%%%%%%%%
%%%%%%%%%%%%%%%%%%%%%%%%%%%%%%%%%
%%%%%%%%%%%%%%%%%%%%%%%%%%%%%%%%%%%%%%%%%%%5

\section{Preliminary estimates}

We start with the following lemma. 
%%%%%%%%%%%%%%%%%%%%%%%%%%%%%%
\begin{lem}
\label{lem:newHardy}
Let $m=2$, $\max\{\gamma_1,\gamma_2\}\ge 2$ %$\gamma_{2}\ge2$, 
and $\alpha\in\R$.
There exists a constant $C_{1}>0$ 
such that for all $u\in C_{0\gamma}^{\infty}(\R^{|\gamma|}\setminus Y(\gamma))$, 
\begin{equation}
\int_{\R^{|\gamma|}}\frac{|\nabla u(x)|^{2}}{(r_{1}(x)^{\gamma_1-1}r_{2}(x)^{\gamma_2-1})^\frac{\alpha}{|\gamma|-2}}dx\ge C_{1} 
\int_{\R^{|\gamma|}} \frac{|u(x){}|^{2}}{(r_{1}(x)^{\gamma_1-1}r_{2}(x)^{\gamma_2-1})^\frac{\alpha+2}{|\gamma|-2}} dx. 
\label{eq:newHardy1}
\end{equation}
Moreover there exists a positive constant $C_\gamma$ depending on $\gamma$, such that  $C_{1}\ge C_{\gamma}\alpha^{2}$ for all $\alpha$ sufficiently large.  
\end{lem}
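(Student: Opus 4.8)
\textbf{Proof strategy for Lemma~\ref{lem:newHardy}.}
The plan is to reduce the $|\gamma|$-dimensional inequality to a weighted Hardy inequality in the two variables $r_1, r_2$ by integrating out the angular variables on each block, and then to prove that one-dimensional/two-dimensional weighted Hardy inequality directly. Write $\beta_i = \tfrac{\alpha(\gamma_i-1)}{|\gamma|-2}$, so the left-hand weight is $r_1^{-\beta_1} r_2^{-\beta_2}$ and the right-hand weight is $r_1^{-\beta_1 - \frac{2(\gamma_1-1)}{|\gamma|-2}} r_2^{-\beta_2 - \frac{2(\gamma_2-1)}{|\gamma|-2}}$; note the two exponents on the right exceed those on the left by amounts summing to $2$. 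For $u\in C_{0\gamma}^\infty$ we have $u=u(r_1,r_2)$, and using polar coordinates in each block, $dx = c_{\gamma_1} c_{\gamma_2}\, r_1^{\gamma_1-1} r_2^{\gamma_2-1}\, dr_1\, dr_2\, d\sigma_1\, d\sigma_2$, while $|\nabla u|^2 = |\partial_{r_1}u|^2 + |\partial_{r_2}u|^2$ (pointwise, since $u$ depends only on the radii). So both sides become, up to the constant $c_{\gamma_1}c_{\gamma_2}$, integrals over the quarter-plane $\{r_1>0, r_2>0\}$ against the measure $r_1^{\gamma_1-1} r_2^{\gamma_2-1}\,dr_1\,dr_2$.

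Then I would prove the resulting two-variable inequality: for $v=v(s,t)$ supported away from the axes on which $\gamma_i \ge 2$,
\begin{equation}
\int_0^\infty\!\!\int_0^\infty \bigl(|\partial_s v|^2 + |\partial_t v|^2\bigr) s^{a_1} t^{a_2}\frac{ds\,dt}{(s^{\gamma_1-1}t^{\gamma_2-1})^{\alpha/(|\gamma|-2)}} \ge C_1 \int_0^\infty\!\!\int_0^\infty |v|^2\, s^{a_1} t^{a_2}\frac{ds\,dt}{(s^{\gamma_1-1}t^{\gamma_2-1})^{(\alpha+2)/(|\gamma|-2)}},
\label{eq:reduced}
\end{equation}
with $a_i=\gamma_i-1$. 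This splits: if $\gamma_1\ge 2$ one applies the one-dimensional weighted Hardy inequality in the $s$-variable (treating $t$ as a parameter) with weight $s^{a_1-\beta_1}$, which holds with constant $\sim (\text{exponent shift})^{-2}$ since $a_1-\beta_1 = (\gamma_1-1)(1-\frac{\alpha}{|\gamma|-2})$; the one-dimensional Hardy inequality $\int_0^\infty |w'|^2 s^{\mu}ds \ge (\tfrac{\mu-1}{2})^2\int_0^\infty |w|^2 s^{\mu-2}ds$ (valid when the support avoids $0$, or for all $w$ when $\mu<1$) does the job, using only the $s$-derivative. Symmetrically if $\gamma_2\ge 2$ one uses the $t$-derivative. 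Since $\max\{\gamma_1,\gamma_2\}\ge 2$ at least one of these works, and that single application already yields \eqref{eq:reduced}; discarding the other (nonnegative) gradient term costs nothing. Tracking the constant: the Hardy constant in the $s$-variable is $\bigl(\tfrac{a_1-\beta_1-1}{2}\bigr)^2 = \tfrac14\bigl((\gamma_1-1)(1-\tfrac{\alpha}{|\gamma|-2})-1\bigr)^2$, which is quadratic in $\alpha$ with leading coefficient $\tfrac{(\gamma_1-1)^2}{4(|\gamma|-2)^2}>0$ (here I use $\gamma_1\ge2$; if only $\gamma_2\ge2$ use the analogous expression in $\gamma_2$), giving $C_1\ge C_\gamma\alpha^2$ for large $\alpha$.

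The main obstacle is bookkeeping rather than conceptual: one must (a) justify that the approximation by $C_{0\gamma}^\infty(\R^{|\gamma|}\setminus Y(\gamma))$ functions makes the one-dimensional Hardy inequality applicable even when the relevant power $\mu=a_i-\beta_i$ is $\ge 1$ — this is exactly why we require supports away from $Y(\gamma)$, i.e. away from the axis $r_i=0$ when $\gamma_i\ge2$, so that the boundary term in the integration-by-parts proof of Hardy vanishes — and (b) check that the case $\gamma_i=1$ causes no trouble: if $\gamma_1=1$ then $a_1-\beta_1=0$ and there is no singularity in $s$, so one simply uses the $t$-variable Hardy inequality, which is available precisely because then $\gamma_2=|\gamma|-1\ge2$. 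A minor point is that when $1-\tfrac{\alpha}{|\gamma|-2}$ makes $\mu<1$ one gets Hardy for free without the support restriction, but we do not need this refinement. Finally, one restores the constants $c_{\gamma_i}$ from the angular integration; they cancel between the two sides, so $C_1$ depends only on the Hardy constant computed above.
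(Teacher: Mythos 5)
There is a genuine gap, and it sits at the heart of what the lemma is designed to prove. Your plan is to apply the one-dimensional weighted Hardy inequality in a single variable, say $s=r_1$, with $t=r_2$ frozen. That inequality lowers the exponent of $s$ by exactly $2$ and leaves the exponent of $t$ untouched, so it produces the right-hand weight $s^{\mu_1-2}t^{\mu_2}$ (with $\mu_i=(\gamma_i-1)(1-\tfrac{\alpha}{|\gamma|-2})$). But the target weight in \eqref{eq:newHardy1} is $s^{\mu_1-\frac{2(\gamma_1-1)}{|\gamma|-2}}\,t^{\mu_2-\frac{2(\gamma_2-1)}{|\gamma|-2}}$: the total drop of $2$ that you correctly identified is \emph{split between the two variables}. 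When both $\gamma_i\ge2$ these two weights are incomparable (each dominates the other only on one of the regions $s\le t$, $s\ge t$), so a "single application" does not yield the reduced inequality. One could repair the weight by taking the geometric mean of the $s$-Hardy and $t$-Hardy inequalities via H\"older with exponents $\tfrac{|\gamma|-2}{\gamma_i-1}$ — this is essentially what the paper does in Lemma~\ref{lem:5} for $q<2$ — but then the constant is a product of one-dimensional Hardy constants $\bigl(\tfrac{\mu_i-1}{2}\bigr)^2$, and these vanish for certain $\alpha$. Your own constant $\tfrac14\bigl((\gamma_1-1)(1-\tfrac{\alpha}{|\gamma|-2})-1\bigr)^2$ equals $0$ precisely at $\alpha=(|\gamma|-2)\tfrac{\gamma_1-2}{\gamma_1-1}$; for $\gamma_1=\gamma_2=2$, $\alpha=0$ this kills the flagship inequality $\int_{\R^4}\frac{u^2}{r_1r_2}\,dx\le C\int_{\R^4}|\nabla u|^2dx$ from the abstract. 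The lemma asserts $C_1>0$ for \emph{every} $\alpha\in\R$, and the introduction states explicitly that the point is to obtain a positive constant "not obtainable via H\"older inequality, even when the constant \eqref{easyC} is zero."

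The paper's proof takes a different route after the same initial reduction to the quadrant: it introduces polar coordinates $r_1=r\cos\theta$, $r_2=r\sin\theta$ in the $(r_1,r_2)$-plane, discards $|\partial_r u|^2$ and keeps only $\tfrac{1}{r^2}|\partial_\theta u|^2$. Because the total homogeneity drop is exactly $2$, the powers of $r$ then match identically on both sides, and the problem becomes a one-dimensional Hardy inequality in $\theta$ on $(0,\pi/2)$ with weight $\psi(\theta)=(\cos\theta)^{\gamma_1-1}(\sin\theta)^{\gamma_2-1}$, which (after a change of variable $t(\theta)$) reduces to $\int|w'|^2\ge C\int V|w|^2$ with $V(t)=o(t^{-2})$. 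This angular mechanism is what supplies a strictly positive constant for all $\alpha$, including the values where the separated-variable constants degenerate; it is not recoverable from one-dimensional Hardy inequalities in $r_1$ and $r_2$ alone. You would need to replace your key step with an argument of this kind (or another mechanism exploiting the joint behaviour near the corner $r_1=r_2=0$) for the proof to go through.
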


\begin{proof}
The function $u$ is block-radial, therefore the inequality \eqref{eq:newHardy1} is equivalent to 
\begin{equation} \nonumber
\int_0^\infty\hspace{-2mm}\int_0^\infty\frac{|\nabla u(r_1,r_2)|^2}{(r_{1}^{\gamma_1-1}r_{2}^{\gamma_2-1})^{\frac{\alpha}{|\gamma|-2}-1}} dr_1 dr_2 \ge C_1
\int_0^\infty\hspace{-2mm}\int_0^\infty\frac{|u(r_1,r_2)|^2}{(r_{1}^{\gamma_1-1}r_{2}^{\gamma_2-1})^{\frac{\alpha+2}{|\gamma|-2}-1}} dr_1 dr_2. 
%\label{eq:newHardy1a}
\end{equation}
We assume that $ \max\{\gamma_1,\gamma_2\}=\gamma_{2}$ so that  $\gamma_{2}\ge 2$. The case $\max\{\gamma_1,\gamma_2\}=\gamma_{1}$ then follows by renumbering the variables.  
Let us provide the quadrant $(0,\infty)^2$ with polar coordinates by setting $r_1=r\cos\theta$, $r_2=r\sin\theta$, $0\le\theta\le\pi/2$. In these coordinates we have 
$$
|\nabla_{r_1,r_2} u(r_1,r_2)|^2=|\partial_ru(r,\theta)|^2+\frac{|\partial_\theta u(r,\theta)|^2}{r^2}\ge \frac{|\partial_\theta u(r,\theta)|^2}{r^2}.
$$ 
It suffices therefore to prove the following inequality under the integral with respect to $r$, using the notation $\psi(\theta)=(\cos\theta)^{\gamma_1-1}(\sin\theta)^{\gamma_2-1}$:
\begin{equation}
\int_0^{\pi/2}
|\partial_\theta u(r,\theta)|^2\psi(\theta)^{1-\frac{\alpha}{|\gamma|-2}} d\theta \ge C_1
\int_0^{\pi/2}|u(r,\theta)|^2\psi(\theta)^{1-\frac{\alpha+2}{|\gamma|-2}} d\theta , %\;r>0, 
\label{eq:Hardy-theta}
\end{equation}
$r>0$, with the boundary condition $u(r,0)=u(r,\pi/2)=0$ or $u(r,0)=0 $ if $\gamma_1=1$.
We will show that \eqref{eq:Hardy-theta} follows from the Hardy inequality in one dimension. 

First we assume that $\gamma_1>1$. An elementary calculation shows that $\theta_\gamma =\arctan\sqrt{\frac{\gamma_2-1}{\gamma_1-1}}$ is a point of internal maximum for function $\psi(\theta)$, with the negative second derivative, and that $\psi(\theta)$ is increasing on the interval $(0,\theta_\gamma)$ and is decreasing on $(\theta_\gamma,\pi/2)$.   
Consider now the inequalities
\begin{equation}
\int_0^{\theta_\gamma}
|u_\theta(r,\theta)|^2\psi(\theta)^{1-\frac{\alpha}{|\gamma|-2}} d\theta \ge C(\beta)
\int_0^{\theta_\gamma}|u(r,\theta)|^2\psi(\theta)^{1-\frac{\alpha+2}{|\gamma|-2}} d\theta, \;r>0,
\label{eq:Hardy-theta-gamma}
\end{equation}
and 
\begin{equation}
\int_{\theta_\gamma}^{\pi/2}
|u_\theta(r,\theta)|^2\psi(\theta)^{1-\frac{\alpha}{|\gamma|-2}} d\theta \ge C(\beta)
\int_{\theta_\gamma}^{\pi/2}|u(r,\theta)|^2\psi(\theta)^{1-\frac{\alpha+2}{|\gamma|-2}} d\theta, \;r>0.
\label{eq:Hardy-theta-gamma1}
\end{equation}
Once we prove the inequalities, with respective conditions $u(r,0)=0$,  and $u(r,\pi/2)=0$, we have \eqref{eq:Hardy-theta} which yields the assertion of the lemma.   
We prove below only \eqref{eq:Hardy-theta-gamma}, since  \eqref{eq:Hardy-theta-gamma1} follows from \eqref{eq:Hardy-theta-gamma} by interchanging role of the variables $r_1$ and $r_2$ i.e. taking $r_1=r\sin\theta$, $r_2=r\cos\theta$.

Let $\nu=\left(\frac{\alpha}{|\gamma|-2}-1\right)(\gamma_2-1)$. We introduce a mapping $t(\theta)$ as a solution of equation
\begin{equation}
t'(\theta) = {\mathrm{ sign}}(\nu+1) \psi(\theta)^{\frac{\alpha}{|\gamma|-2}-1}, 
\end{equation}
where ${\mathrm{sign}}(x) = 1$ if $x>0$ and $-1$ otherwise.  We set the initial condition for the solution according to values of the parameters $\alpha$ and $\gamma$, by taking into account that $\psi(\theta)=\theta^{\gamma_2-1}(1+o_{\theta\to 0}(1))$. 
 Then $t'(\theta)=\theta^\nu( {\mathrm{ sign}}(\nu+1)+o_{\theta\to 0}(1))$. % with $\nu=\left(\frac{\alpha}{|\gamma|-2}-1\right)(\gamma_2-1)$.
If $\nu>-1$, we set $t(0)=0$, so that $t$ is a monotone-increasing bijection between $(0,\theta_\gamma)$ and $I=(0,t_\gamma)$ with some $t_\gamma\in(0,\infty)$. If $\nu\le-1$, we set $t(0)=+\infty$, so that $t$ is a monotone-decreasing bijection between $(0,\theta_\gamma)$ and $I=(t_\gamma,\infty)$ with some $t_\gamma\in\R$.     
A substitution $\theta\mapsto t(\theta)$ into \eqref{eq:Hardy-theta-gamma}, gives us 
\begin{equation}\label{eq:Hardy-theta-gamma_a} 
\int_I |w'(t)|^2dt\ge C\, \int_I |w(t)|^2V(t)dt,\;w(0)=0,
\end{equation}
where the weight 
$$
V(t) = \psi(\theta(t))^{1-\frac{\alpha+2}{|\gamma|-2}}\theta'(t) =  {\mathrm{ sign}}(\nu+1) t'(\theta(t))^{-2} \psi(\theta(t))^{-\frac{2}{|\gamma|-2}}
$$ 
is singular either at zero, when $\nu>-1$, or at infinity, when $\nu\le -1$. It suffices therefore to verify that $V(t)=O(t^{-2})$ when $t$ goes respectively to zero or to infinity. In fact, we get $V(t)=o(t^{-2})$. Computation of the asymptotic behaviour of $V(t)$ and estimation of the coefficient $ C_1(\alpha)\ge C_\gamma\alpha^{2}$ for large $\alpha$ is straightforward and is left to the reader. 
Here we give only some elaboration for the case $\nu=-1$. In this case $t(\theta)=\log\frac{1}{\theta}(1+o(1))$,  $V(t)=e^{-(\gamma_2-1)\left(2-\frac{2\alpha+2}{|\gamma|-2}\right)t}(1+o(1))$,
and the coefficient in the exponent is negative. When $\nu\neq -1$, $V(t)$ has a two-sided estimate by a suitable power of $t$.

The inequality \eqref{eq:Hardy-theta-gamma_a} follows from one-dimensional Hardy inequality and the estimate $V(t)=O(t^{-2})$.

At the end let  $\gamma_1=1$. Then  $\theta_\gamma=\pi/2$, so it is sufficient to consider the integral from zero to $\frac{\pi}{2}$ with the boundary condition $u(r,0)=0$, cf. \eqref{eq:Hardy-theta-gamma}. We proceed in the similar way to the former case. \qed
\end{proof}

%%%%%%%%%%%%%%%%%%%%%%%%%%%%%%
\begin{lem}\label{lem:2}
Let $m=2$,  $\max\{\gamma_1,\gamma_2\}\ge 2$, %$\gamma_{2}\ge2$, 
$q\in [2,\infty)$, and $\beta\in\R$.
There exists a constant $C_{2}>0$  
such that for all $u\in C_{0\gamma}^{\infty}(\R^{|\gamma|}\setminus Y(\gamma))$, 
\begin{equation}
\int_{\R^{|\gamma|}} \frac{|u(x)|^{q}}{(r_{1}(x)^{\gamma_1-1}r_{2}(x)^{\gamma_2-1})^\frac{\beta+q}{|\gamma|-2}} dx\le C_2 q^q
\int_{\R^{|\gamma|}}\frac{|\nabla u(x)|^{q}}{(r_{1}(x)^{\gamma_1-1}r_{2}(x)^{\gamma_2-1})^\frac{\beta}{|\gamma|-2}}dx.
\label{eq:newHardy1-1}
\end{equation}
Moreover,  $C_{2}\le \left(\frac{C(\gamma)}{\beta+q}\right)^{q}$ for all sufficiently large values of $\beta+q$.  
\end{lem}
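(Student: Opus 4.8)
The plan is to reduce \eqref{eq:newHardy1-1} to an inequality of the form \eqref{eq:newHardy1}, i.e. to the case $q=2$ already handled in Lemma~\ref{lem:newHardy}, by composing $u$ with a power nonlinearity. Since $q\ge 2$, set $v=|u|^{q/2}$ (more precisely $v=\operatorname{sign}(u)|u|^{q/2}$ so that $v$ stays smooth away from the zero set of $u$, or one approximates $t\mapsto|t|^{q/2}$ by a smooth function and passes to the limit; because $u\in C_{0\gamma}^\infty(\R^{|\gamma|}\setminus Y(\gamma))$ the function $v$ is block-radial, compactly supported away from $Y(\gamma)$, and Lipschitz, hence admissible by density). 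Then $|v|^2=|u|^q$ and $|\nabla v|=\frac q2|u|^{\frac q2-1}|\nabla u|$ a.e. Applying Lemma~\ref{lem:newHardy} to $v$ with the parameter choice $\alpha=\beta$ — so that $\frac{\alpha+2}{|\gamma|-2}$ matches the exponent $\frac{\beta+q}{|\gamma|-2}$ after we note $|v|^2/(r_1^{\gamma_1-1}r_2^{\gamma_2-1})^{(\beta+2)/(|\gamma|-2)} = |u|^q/(r_1^{\gamma_1-1}r_2^{\gamma_2-1})^{(\beta+2)/(|\gamma|-2)}$, which is \emph{not} yet the target exponent — I see that the exponents do not line up for general $q$, so the correct substitution is to run Lemma~\ref{lem:newHardy} not on the original weight but recognize that after raising to power $q/2$ the homogeneity of the weight should itself be scaled. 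Concretely, I will apply \eqref{eq:newHardy1} with $\alpha$ chosen so that $\alpha\cdot\frac{2}{q}\cdot\frac{1}{|\gamma|-2}=\frac{\beta}{|\gamma|-2}$ is \emph{not} the right bookkeeping either; the clean route is instead the following.

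Apply the one-dimensional Hardy inequality directly in the $\theta$ variable exactly as in the proof of Lemma~\ref{lem:newHardy}, but now in the $L^q$ form: after passing to polar coordinates $r_1=r\cos\theta$, $r_2=r\sin\theta$ and using $|\nabla_{r_1,r_2}u|\ge r^{-1}|\partial_\theta u|$, it suffices to prove, for each fixed $r>0$,
\begin{equation}
\int_0^{\pi/2}|u(r,\theta)|^q\,\psi(\theta)^{1-\frac{\beta+q}{|\gamma|-2}}\,d\theta\le \Bigl(\tfrac{C(\gamma)}{\beta+q}\Bigr)^q\int_0^{\pi/2}|\partial_\theta u(r,\theta)|^q\,\psi(\theta)^{1-\frac{\beta}{|\gamma|-2}}\,d\theta,\nonumber
\end{equation}
with boundary conditions $u(r,0)=u(r,\pi/2)=0$ (or only $u(r,0)=0$ if $\gamma_1=1$), where $\psi(\theta)=(\cos\theta)^{\gamma_1-1}(\sin\theta)^{\gamma_2-1}$. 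Split $(0,\pi/2)$ at the interior maximum $\theta_\gamma=\arctan\sqrt{(\gamma_2-1)/(\gamma_1-1)}$ of $\psi$ (or use the whole interval when $\gamma_1=1$), and on each subinterval perform the same change of variables $t=t(\theta)$ with $t'(\theta)=\operatorname{sign}(\nu+1)\,\psi(\theta)^{\frac{\beta}{q(|\gamma|-2)}\cdot\frac{q}{\cdots}}$ — the exponent being fixed by requiring that the gradient integral become $\int_I|w'(t)|^q\,dt$ — so that the reduced inequality reads $\int_I|w'|^q\,dt\ge C\int_I|w|^q V(t)\,dt$ with $w$ vanishing at the singular endpoint. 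The $L^q$ Hardy inequality on a half-line gives this provided $V(t)=O(t^{-q})$ near the singular endpoint, with constant $\bigl(\frac{q}{|q-1|}\bigr)^{-q}$ times the constant in the bound $V(t)\le C t^{-q}$; as in Lemma~\ref{lem:newHardy} one checks $V(t)=o(t^{-q})$, and tracking the exponent through the change of variables produces the factor $(\beta+q)^{-q}$ in the constant for large $\beta+q$ (the relevant power appearing in $V$ is a negative power of $|\gamma|-2$ times $(\beta+q)$, exactly the analogue of the $\alpha^2$ bound in the $q=2$ case). Reassembling the two subintervals and integrating back in $r$ with the Jacobian $r\,dr\,d\theta$ restores \eqref{eq:newHardy1-1}, with $C_2 q^q$ on the right because the substitution $|\nabla u|\ge r^{-1}|\partial_\theta u|$ and the explicit constant $\bigl(\frac{q}{|q-1|}\bigr)^q\le (2q)^q$ from Hardy together absorb a factor $q^q$ into $C_2$.

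The main obstacle is the bookkeeping of exponents and boundary conditions in the change of variables $\theta\mapsto t(\theta)$: one must choose $t'(\theta)$ so that the $q$-th power of the gradient term has trivial weight in the $t$-variable, then verify that the induced weight $V(t)$ on the zeroth-order term is $O(t^{-q})$ at the singular endpoint — which splits into the generic power-weight case and the borderline logarithmic case $\nu=-1$, handled as in Lemma~\ref{lem:newHardy} — and finally confirm that the surviving constant scales like $(\beta+q)^{-q}$ for $\beta+q$ large. The rest (density/approximation to justify using Lipschitz or piecewise-smooth test functions, the reduction via polar coordinates, and the final reassembly) is routine and parallels Lemma~\ref{lem:newHardy} verbatim with $2$ replaced by $q$.
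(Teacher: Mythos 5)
Your final argument is workable, but it is a genuinely different route from the paper's --- and, ironically, the approach you abandon in your opening paragraph is the one the paper actually uses. The paper applies Lemma~\ref{lem:newHardy} to $v=u^{q/2}$ with $\alpha=\beta+q-2$ (not $\alpha=\beta$), so that $\alpha+2=\beta+q$ and the zero-order sides match exactly; the price is that the gradient side becomes the mixed term $\frac{q^2}{4}\int u^{q-2}|\nabla u|^2\,r_\gamma^{-(\beta+q-2)}dx$, which is then split as $r_\gamma^{-(\beta+q)\frac{q-2}{q}}\cdot r_\gamma^{-\frac{2\beta}{q}}$ and absorbed by H\"older with exponents $\frac{q}{q-2}$ and $\frac{q}{2}$ back into the two sides of \eqref{eq:newHardy1-1}. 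That short argument inherits its constant directly from Lemma~\ref{lem:newHardy}: $C_2=(4C_1)^{-q/2}$ with $C_1\ge C_\gamma(\beta+q-2)^2$, which yields the bound $(C(\gamma)/(\beta+q))^q$ with no new asymptotic analysis. The step you missed is precisely this H\"older interpolation; with it, your first attempt closes in three lines. Your substitute route --- redoing the polar-coordinate reduction of Lemma~\ref{lem:newHardy} directly in $L^q$ --- is legitimate (the reduction to the fixed-$r$ inequality in $\theta$ is correct, and the powers of $r$ do match on both sides), but it forces you to repeat, with $q$-dependent exponents, the entire case analysis on $\nu$, the borderline logarithmic case, and the endpoint asymptotics of the induced weight.

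If you keep the direct route, three things must actually be written down rather than asserted. First, the change of variables: your formula for $t'(\theta)$ contains a literal ``$\cdots$''; the requirement that $|\partial_\theta u|^q\psi^{1-\frac{\beta}{|\gamma|-2}}d\theta$ become $|w'(t)|^q dt$ forces $|t'(\theta)|^{q-1}=\psi(\theta)^{\frac{\beta}{|\gamma|-2}-1}$, i.e.\ $t'(\theta)=\pm\,\psi(\theta)^{\frac{1}{q-1}\left(\frac{\beta}{|\gamma|-2}-1\right)}$, so that $\nu=\frac{\gamma_2-1}{q-1}\left(\frac{\beta}{|\gamma|-2}-1\right)$. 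Second, the verification that $V(t)=O(t^{-q})$ at the singular endpoint: this does hold (the exponent comparison reduces to $\gamma_1\ge 1$, with $V=o(t^{-q})$ unless $\gamma_1=1$), but it is the crux of the proof and cannot be left implicit. Third, the constant: the one-dimensional $L^q$ Hardy constant $\left(\frac{q}{q-1}\right)^q$ is bounded by $4$ for $q\ge 2$, so it does not ``absorb a factor $q^q$''; the stated asymptotics must instead come from a uniform bound $\sup_t V(t)t^{q}\lesssim|\nu+1|^{-q}$ on the whole subinterval, valid for large $\beta+q$, which you gesture at but do not carry out. None of these is a fatal obstruction, but the paper's substitution-plus-H\"older argument avoids all of them.
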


\begin{proof}
Since the values of the left and the right hand side do not change
from the replacement of $u$ by $|u|$ (and since we can use approximation
of Lipschitz functions by smooth functions), assume without loss of
generality that $u\ge0$.  
Applying   Lemma \ref{lem:newHardy} to $v=u^{q/2}$, with $\alpha=\beta+q-2$
we have, with the same constant $C_1$, 
\begin{align}
\nonumber
\int_{\R^{|\gamma|}}\frac{u(x)^{q}}{r_\gamma(x)^{\beta+q}}dx\,  =\, \int_{\R^{|\gamma|}}\frac{v(x){}^{2}}{r_\gamma(x)^{\alpha+2}}dx &\le \\ 
  \qquad\qquad\le\,  C_{1}^{-1}\int_{\R^{|\gamma|}}\frac{|\nabla u(x)^{q/2}|^{2}}{r_\gamma(x)^{\alpha}}dx &=\, C_{1}^{-1}\frac{q^{2}}{4}\int_{\R^{|\gamma|}}\frac{|\nabla u(x)|^{2}u(x)^{q-2}}{r_\gamma(x)^{\beta+q-2}}dx.
\nonumber
\end{align}

Applying H\"older inequality with exponents $\frac{q}{2}$, $\frac{q}{q-2}$,
we get 
\begin{align}
\nonumber
 \int_{\R^{|\gamma|}}\frac{u(x)^{q}}{r_\gamma(x)^{\beta+q}}dx & \le \;
 C_{1}^{-1}\frac{q^{2}}{4}\int_{\R^{\gamma}}\frac{u(x)^{q-2}|\nabla u(x)|^{2}}{r_\gamma^{(\beta+q)\frac{q-2}{q}}r_\gamma^{2\frac{\beta}{q}}} dx \le
\\ 
&\le  C_{1}^{-1}\frac{q^{2}}{4}\left(\int_{\R^{|\gamma|}}\frac{|u(x)|^{q}}{r_\gamma(x)^{\beta+q}} dx \right)^{1-\frac{2}{q}}\left(\int_{\R^{|\gamma|}}\frac{|\nabla u(x)|^{q}}{r_\gamma(x)^{\beta}} dx \right)^{\frac{2}{q}},
\nonumber
\end{align}
from which (\ref{eq:newHardy1-1}) is immediate. \qed
\end{proof}
%%%%%%%%%%%%%%%%%%%%%%%%%%%%%%%%%%%%%%%%%%%%%%%%%%%%
%%%%%%%%%%%%%%
%%%%%%%%%%%%%%%%%%%%%%%%%%%%
\begin{lem}
\label{lem:4} Let $1<m<|\gamma|$ and $q\in[2,\infty)$. There exists a constant $C_{3}>0$ %, $C_{3}\le \left(\frac{C(\gamma)}{q}\right)^q$ 
%for all sufficiently large  $q$, 
such that for all $u\in C_{0\gamma}^{\infty}(\R^{|\gamma|}\setminus Y(\gamma))$, 
\begin{equation}
\int_{\R^{|\gamma|}} \frac{|u(x)|^{q}}{r_\gamma(x)^{q}}\le C_3q^q 
\int_{\R^{|\gamma|}}|\nabla u(x)|^{q}dx,
\label{eq:Hardy>2}
\end{equation}
where $r_\gamma(x)={(r_{1}(x)^{\gamma_1-1}\dots r_{m}(x)^{\gamma_m-1})^\frac{1}{|\gamma|-m}}$.

Moreover,  $C_{3}\le \left(\frac{C(\gamma)}{q}\right)^{q}$ for all sufficiently large values of $q$.  
\end{lem}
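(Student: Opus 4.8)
The plan is to prove \eqref{eq:Hardy>2} by reducing it, exactly as in the proof of Lemma~\ref{lem:2}, to a weighted $L^2$ Hardy inequality, and then establishing the latter by induction on the number $m$ of blocks. For the first reduction, one may assume $u\ge 0$ (both sides are unchanged under $u\mapsto|u|$, and Lipschitz functions are approximated by smooth ones) and set $v=u^{q/2}$, so that $|\nabla v|^2=\tfrac{q^2}{4}u^{q-2}|\nabla u|^2$. If one knows, for every real $\alpha$, the parametrized estimate
$$
\int_{\R^{|\gamma|}}\frac{v^2}{r_\gamma^{\alpha+2}}\,dx\le C_1(\gamma,\alpha)^{-1}\int_{\R^{|\gamma|}}\frac{|\nabla v|^2}{r_\gamma^{\alpha}}\,dx,\qquad v\in C_{0\gamma}^{\infty}(\R^{|\gamma|}\setminus Y(\gamma)),\qquad C_1(\gamma,\alpha)\ge C_\gamma\alpha^2 \ \ (\alpha\text{ large}),
$$
then, applying it with $\alpha=q-2$ and then H\"older's inequality with exponents $\tfrac q2,\tfrac q{q-2}$, pairing the whole weight $r_\gamma^{-(q-2)}$ with the factor $u^{q-2}$ (so that $(u^{q-2}r_\gamma^{-(q-2)})^{q/(q-2)}=u^q r_\gamma^{-q}$ and $(|\nabla u|^2)^{q/2}=|\nabla u|^q$), one gets \eqref{eq:Hardy>2} precisely as in Lemma~\ref{lem:2}; the bound $C_1(\gamma,q-2)\ge C_\gamma(q-2)^2$ then translates into $C_3\le(C(\gamma)/q)^q$ for $q$ large.

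So it remains to prove the weighted $L^2$ inequality displayed above. Viewing the block-radial function $v$ as a function of $(r_1,\dots,r_m)\in(0,\infty)^m$ and introducing polar coordinates $r_j=\rho\,\omega_j$ with $\rho=|(r_1,\dots,r_m)|$ and $\omega$ in the spherical simplex $S_+^{m-1}=S^{m-1}\cap(0,\infty)^m$, one computes $dx=c\,\rho^{|\gamma|-1}\psi(\omega)\,d\rho\,d\sigma(\omega)$ with $\psi(\omega)=\prod_{j=1}^m\omega_j^{\gamma_j-1}$, $r_\gamma=\rho\,\psi(\omega)^{1/(|\gamma|-m)}$, and $|\nabla v|^2=|\partial_\rho v|^2+\rho^{-2}|\nabla_{S^{m-1}}v|^2\ge\rho^{-2}|\nabla_{S^{m-1}}v|^2$. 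Since the power of $\rho$ occurring on the two sides is the same, the inequality follows once we establish the weighted $L^2$-Hardy inequality on the simplex,
$$
\int_{S_+^{m-1}}|v|^2\,\psi^{\,1-(\alpha+2)/(|\gamma|-m)}\,d\sigma\le C\int_{S_+^{m-1}}|\nabla_{S^{m-1}}v|^2\,\psi^{\,1-\alpha/(|\gamma|-m)}\,d\sigma,
$$
for $v$ vanishing near the faces $\{\omega_j=0\}$ with $\gamma_j\ge 2$, with $C\ge C_\gamma\alpha^2$ for large $\alpha$; for $m=2$ this is the angular inequality \eqref{eq:Hardy-theta} from the proof of Lemma~\ref{lem:newHardy}, which serves as the base case.

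For the induction on $m$ I would prove a slightly more flexible statement on $S_+^{m-1}$, with weights $\prod_j\omega_j^{c_j}$ on the left and $\prod_j\omega_j^{d_j}$ on the right, under the admissibility conditions $d_j\ge c_j$, $\sum_j(d_j-c_j)=2$, $d_j>-1$, together with the corresponding vanishing conditions (the weights of the previous display are the case $c_j=(\gamma_j-1)\bigl(1-\tfrac{\alpha+2}{|\gamma|-m}\bigr)$, $d_j=(\gamma_j-1)\bigl(1-\tfrac{\alpha}{|\gamma|-m}\bigr)$). In the inductive step one separates the last variable, $\omega_m=\sin\phi$, $(\omega_1,\dots,\omega_{m-1})=\cos\phi\cdot\eta$ with $\phi\in(0,\pi/2)$, $\eta\in S_+^{m-2}$, so that $|\nabla_{S^{m-1}}v|^2=|\partial_\phi v|^2+\cos^{-2}\phi\,|\nabla_{S^{m-2}}v|^2$, and uses a partition of unity localizing near the singular strata: the contribution of a neighbourhood of an $\eta$-face $\{\eta_j=0\}$ is controlled by the induction hypothesis on $S_+^{m-2}$ (the $\phi$-weights being comparable to constants there), while the contribution of a neighbourhood of $\{\phi=0\}$ or of the corner $\{\phi=\pi/2\}$ kept away from the $\eta$-faces is controlled by the one-dimensional Hardy inequality in $\phi$ (the $\eta$-weight $\prod_{j<m}\eta_j^{c_j}$ being then bounded above and below); the identity $\sum_j(d_j-c_j)=2$ is exactly what guarantees that in each localized piece the required weight shift is covered, and a direct estimate of the one-dimensional Hardy constants yields the lower bound $C\ge C_\gamma\alpha^2$.

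The analytic ingredients here are routine: the one-dimensional Hardy inequality, the elementary bound $(a^2+b^2)^{q/2}\ge\tfrac12(a^q+b^q)$ for $q\ge 2$ (only needed if one prefers to stay with $L^q$ throughout), and Jacobian computations. The real difficulty is the inductive step: choosing the auxiliary family of weights so that the induction closes, and arranging the localization so that near every face and corner of the simplex either the one-dimensional Hardy inequality or the lower-dimensional induction hypothesis applies with matching weights; the corners, where several coordinates $\omega_j$ vanish simultaneously, require the most care, since there the block-radial structure of a sub-collection of blocks reappears and the inequality must be fed back into the induction. A parallel bookkeeping of constants through all these reductions is what produces the asymptotically sharp bound $C_3\le(C(\gamma)/q)^q$.
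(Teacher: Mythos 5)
Your opening reduction---replacing $u$ by $v=u^{q/2}$ and applying H\"older with exponents $\tfrac q2,\tfrac q{q-2}$ to pass from a weighted $L^2$ Hardy inequality to the $L^q$ one---is exactly the mechanism of Lemma~\ref{lem:2}, and that part is sound. The divergence, and the gap, comes next: you require the weighted $L^2$ inequality with the \emph{full} $m$-block weight $r_\gamma^\alpha$, which the paper never proves. The paper proves it only for two blocks (Lemma~\ref{lem:newHardy}), and disposes of general $m$ by a purely algebraic device you do not use: order the blocks so that $\gamma_1\ge\dots\ge\gamma_j\ge2$ and factor cyclically,
\[
r_1^{\gamma_1-1}\cdots r_j^{\gamma_j-1}=\prod_{k=1}^{j}\bigl(r_{k-1}^{\gamma_{k-1}-1}r_k^{\gamma_k-1}\bigr)^{1/2},\qquad \gamma_0:=\gamma_j,\ r_0:=r_j,
\]
then apply H\"older with exponents $p_k=2\frac{|\gamma|-m}{\gamma_{k-1}+\gamma_k-2}$ (which satisfy $\sum_k p_k^{-1}=1$) and invoke the two-block Lemma~\ref{lem:2} with $\beta=0$ on each factor, since $\frac{p_kq}{2(|\gamma|-m)}=\frac{q}{\gamma_{k-1}+\gamma_k-2}$ is exactly the exponent that lemma delivers. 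This reduces everything to the already-established $m=2$ case and carries the bound $C_3\le(C(\gamma)/q)^q$ through with no further analysis.

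Your substitute for this step---an induction on $m$ via a weighted angular Hardy inequality on the simplex $S_+^{m-1}$---is plausible in outline but is not a proof as written: the inductive step, which you yourself call ``the real difficulty,'' is only described (partition of unity, matching weights at faces and corners), not carried out. Two concrete obstructions. First, your admissibility condition $d_j>-1$ fails for large $\alpha$ whenever $\gamma_j\ge2$, since $d_j=(\gamma_j-1)\bigl(1-\tfrac{\alpha}{|\gamma|-m}\bigr)\to-\infty$; but $\alpha=q-2$ with $q$ large is precisely the regime needed for $C_3\le(C(\gamma)/q)^q$, so the inductive statement as formulated does not cover the case you must apply it to (compare the case $\nu\le-1$ in the proof of Lemma~\ref{lem:newHardy}, where the substitution sends the face to infinity rather than to zero). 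Second, at a corner of the simplex where several coordinates vanish, the weight shifts $d_j-c_j=2(\gamma_j-1)/(|\gamma|-m)$ must be apportioned simultaneously among one-dimensional Hardy inequalities and the lower-dimensional induction hypothesis, and no mechanism for this is given; tracking the constant $C\ge C_\gamma\alpha^2$ through such a localization is an additional unaddressed burden. Until the simplex lemma is actually proved with constants, the proposal does not establish \eqref{eq:Hardy>2}; I recommend replacing the induction by the cyclic-pairing H\"older factorization above.
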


\begin{proof}
Without loss of generality assume that the dimensions  $\gamma_i$ are descending, and let $j\in\N$ be the largest value of $i$ such that $\gamma_i\ge 2$. Assume first that $j\in\{1,2\}$. Apply  \eqref{eq:newHardy1-1} with integration over $\R^{\gamma_1+\gamma_2}$, integrate with respect to the remaining variables and augment the gradient in the right hand side by derivatives with respect to the remaining variables.

Let now $j\ge 3$. We use the representation 
\[
r_1^{\gamma_1-1}\dots r_j^{\gamma_j-1}=
\left(r_j^{\gamma_j-1}r_1^{\gamma_1-1}\right)^{1/2} \left(r_1^{\gamma_1-1}r_2^{\gamma_2-1}\right)^{1/2}\dots \left(r_{j-1}^{\gamma_{j-1}-1}r_j^{\gamma_j-1}\right)^{1/2}
\] 
and applying the H\"older inequality.  Let $\gamma_0=\gamma_j$ and 
$$
p_k = 2\frac{|\gamma| - m }{\gamma_{k-1}+\gamma_k -2} = 2\frac{\gamma_1+\cdots + \gamma_j - j }{\gamma_{k-1}+\gamma_k -2}, \qquad k=1,\ldots , j\, . 
$$ 
Then 
$$
\sum_{k=1}^{j} \frac{1}{p_k} = 1\, .
$$
so we can use the H\"older inequality to the product to the product of $j$ terms
\[
\frac{|u|^\frac{q}{p_1}}{\left(r_j^{\gamma_j-1}r_1^{\gamma_1-1}\right)^\frac{q}{2(|\gamma|-m)}}\dots
\frac{|u|^\frac{q}{p_j}}{\left(r_{j-1}^{\gamma_j-1}r_j^{\gamma_j-1}\right)^\frac{q}{2(|\gamma|-m)}}.  
\]
Then for any factor we can used Lemma \ref{lem:2} with $\beta=0$ since 
$\frac{p_k q}{2(|\gamma|-m)}= \frac{q}{\gamma_{k-1}+\gamma_k-2}$ .  In consequence we get 
\begin{align}
\int_{\R^{\gamma_1+\ldots +\gamma_j}} \frac{|u(x)|^{q}}{r_\gamma(x)^{q}} &\le C_3\,  q^q 
\left(\int_{\R^{\gamma_j+\gamma_1}}|\nabla u(x)|^{q} \right)^{\frac{1}{p_1}} \ldots \left(\int_{\R^{\gamma_{j-1}+\gamma_j}}|\nabla u(x)|^{q} \right)^{\frac{1}{p_j}}  \nonumber\\ 
& \le C_3\,  q^q \int_{\R^{\gamma_1+\ldots +\gamma_j}} |\nabla u(x)|^{q}
\label{eq:Hardy>2a}
\end{align}

Now the inequality \eqref{eq:Hardy>2} follows easily from  \eqref{eq:Hardy>2a} by integration  with respect to the remaining variables and augmenting the gradient, if it is necessary.   \qed
\end{proof}
%%%%%%%%%%%%%%%%%%%%%%%%%%%%%%%%%%%%%%%%%
%%%%%%%%%%%%%%%%%%%%%%%%%%%%%%5
\begin{lem}
\label{lem:5} Let  $1<m<|\gamma|$ and $1 \le q < 2$. 
Then for all $u\in C_{0\gamma}^{\infty}(\R^{|\gamma|}\setminus Y(\gamma))$, 
\begin{equation}
\label{eq:Hardy<2}
\int_{\R^{|\gamma|}} \frac{|u(x)|^{q}}{r_\gamma(x)^{q}}\le \frac{q^q}{\prod_{i:\gamma_i\not= 1}|q-\gamma_i|^{\frac{q(\gamma_i-1)}{|\gamma|-2}}} 
\int_{\R^{|\gamma|}}|\nabla u(x)|^{q}dx,
\end{equation}

where $r_\gamma(x)={(r_{1}(x)^{\gamma_1-1}\dots r_{m}(x)^{\gamma_m-1})^\frac{1}{|\gamma|-m}}$
\end{lem}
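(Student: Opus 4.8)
The plan is to deduce \eqref{eq:Hardy<2} directly from the partial Hardy inequality \eqref{eq:Hardy2}, one block at a time, glued together by H\"older's inequality; this reproduces the constant in the first branch of \eqref{easyC}. Write $J=\{i:\gamma_i\ge 2\}$ and, for $i\in J$, put $\theta_i=\frac{\gamma_i-1}{|\gamma|-m}$. Since $\sum_{i=1}^m(\gamma_i-1)=|\gamma|-m$ while $\gamma_i-1=0$ for $i\notin J$, these weights satisfy $\sum_{i\in J}\theta_i=1$, and the definition \eqref{weight} of $r_\gamma$ gives $r_\gamma(x)^q=\prod_{i\in J}r_i(x)^{q\theta_i}$. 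Hence the integrand on the left of \eqref{eq:Hardy<2} factorizes pointwise (on the support of $u$, where all the relevant $r_i$ are positive) as
\[
\frac{|u(x)|^q}{r_\gamma(x)^q}=\prod_{i\in J}\left(\frac{|u(x)|^q}{r_i(x)^q}\right)^{\theta_i}.
\]

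First I would apply the generalized H\"older inequality with exponents $1/\theta_i$, $i\in J$ --- admissible precisely because $\sum_{i\in J}\theta_i=1$ --- to obtain
\[
\int_{\R^{|\gamma|}}\frac{|u(x)|^q}{r_\gamma(x)^q}\,dx\le\prod_{i\in J}\left(\int_{\R^{|\gamma|}}\frac{|u(x)|^q}{r_i(x)^q}\,dx\right)^{\theta_i};
\]
when $J$ is a singleton this step is vacuous. Then, for each fixed $i\in J$, I would estimate the factor by \eqref{eq:Hardy2} applied with the splitting $\R^{|\gamma|}=\R^{\gamma_i}\times\R^{|\gamma|-\gamma_i}$, the block $\R^{\gamma_i}$ playing the role of the $z$-variable there (so $r_i(x)=|z|$): since $1\le q<2\le\gamma_i$ one has $q<\gamma_i$, so \eqref{eq:Hardy2} applies --- the vanishing hypothesis is automatic because $u$ is supported away from $Y_i\subset Y(\gamma)$ --- and yields
\[
\int_{\R^{|\gamma|}}\frac{|u(x)|^q}{r_i(x)^q}\,dx\le\left(\frac{q}{\gamma_i-q}\right)^{q}\int_{\R^{|\gamma|}}|\nabla u(x)|^q\,dx.
\]

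Inserting these bounds into the H\"older estimate and using $\sum_{i\in J}\theta_i=1$ once more gives
\[
\int_{\R^{|\gamma|}}\frac{|u(x)|^q}{r_\gamma(x)^q}\,dx\le\prod_{i\in J}\left(\frac{q}{\gamma_i-q}\right)^{q\theta_i}\int_{\R^{|\gamma|}}|\nabla u(x)|^q\,dx=\frac{q^q}{\prod_{i\in J}|q-\gamma_i|^{\frac{q(\gamma_i-1)}{|\gamma|-m}}}\int_{\R^{|\gamma|}}|\nabla u(x)|^q\,dx,
\]
which is \eqref{eq:Hardy<2} (with $|\gamma|-m$ in the exponent, as in \eqref{easyC}). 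I do not expect a real obstacle here: the only points needing care are checking $\sum_{i\in J}\theta_i=1$, which simultaneously legitimizes the factorization of the weight and the choice of H\"older exponents, and verifying the hypotheses of \eqref{eq:Hardy2} on each block, both of which follow at once from $q<2\le\gamma_i$ and from $u$ being supported off $Y(\gamma)$. Note that $J\neq\emptyset$ since $m<|\gamma|$, and if $|J|=1$ the lemma reduces to a single application of \eqref{eq:Hardy2}.
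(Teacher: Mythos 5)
Your argument is correct and is essentially the paper's own proof: the authors likewise factor the weight as $\prod_i\bigl(|u|^q/r_i^q\bigr)^{\sigma_i}$ with $\sigma_i=\frac{\gamma_i-1}{|\gamma|-m}$, apply H\"older with exponents $1/\sigma_i$ over the blocks with $\gamma_i\ge 2$, and bound each factor by the one-block Hardy inequality (they state it with the partial gradient $\nabla_i$ and then enlarge to the full gradient, which is the same as your direct use of \eqref{eq:Hardy2}). Your closing remark is also right that the derivation yields $|\gamma|-m$ in the exponent of the constant, consistent with \eqref{easyC}; the $|\gamma|-2$ appearing in the statement of the lemma is evidently a misprint.
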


\begin{proof}
Once more we assume that the dimensions  $\gamma_i$ are descending, and let $j\in\N$ be the largest value of $i$ such that $\gamma_i\ge 2$.
From Hardy inequalities for radial functions in $\R^{\gamma_{i}}$, $1\le i\le j$, we have: 
\begin{equation}
\int_{\R^{|\gamma|}}\frac{|u(x)|^{q}}{r_{i}(x)^{q}}dx\le
\left|\frac{q}{q-\gamma_{i}}\right|^{q}\int_{\R^{|\gamma|}}|\nabla_{i}u(x)|^{q} dx , 
%\left|\frac{q}{q-\gamma_{i}}\right|^{q}\int_{\R^{|\gamma|}}\frac{|\nabla_{i}u(x)|^{q}}{r_{i}(x)^{\beta}}, 
\label{eq:lowH1}
\end{equation}
where $\nabla_i$ denotes the gradient with respect to the variables $x_i$ with $i={\sum_{j\le i-1}\gamma_j+1},\ldots , {\sum_{j\le i}\gamma_j}$. 
We represent the integrand  of the left hand side of (\ref{eq:Hardy<2}) as  a product of $m$ terms $\frac{|u(x)|^{\sigma_i q}}{r_{i}(x)^{q\sigma_i}}$,  with $\sigma_i=\frac{\gamma_i-1}{|\gamma|-m}$. Please note that only the indexes $i$ between $1$ and $j$ are relevant and that $|\gamma|-m = \gamma_1+\ldots \gamma_j- j$. So we can apply   the H\"older inequality with the exponent $1/\sigma_i$, $1\le i\le j$, and afterwards replace partial gradients $\nabla_i$ with with the full gradient in the right hand side. In this way  we immediately arrive at (\ref{eq:Hardy<2}). \qed
\end{proof}

% % % % % % % % % % % % % % % % % % % % % % % % % % %
\section{Proofs of the main results}
\subsection{Inequalities of Hardy-type}
%We begin with the proof of Theorem \ref{thm:2}.
%\begin{proof} Combine Lemma~\ref{lem:4} and Lemma~\ref{lem:5}
%\end{proof}
The proof of Theorem \ref{thm:2} follows immediately from Lemma~\ref{lem:4} and Lemma~\ref{lem:5}.  

\begin{corollary}
\label{cor:gradient} Let $1<m<N$. There exists $C(\gamma)>0$, such that for all $u\in C_{0\gamma}^{\infty}(\R^{|\gamma|}\setminus Y(\gamma))$
\begin{equation}
\|u/r_\gamma\|_\infty \le C_\gamma \|\nabla u\|_\infty.
\label{eq:Hardy-infty}
\end{equation}
\end{corollary}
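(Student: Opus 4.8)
The plan is to obtain the $L^\infty$-bound \eqref{eq:Hardy-infty} as a limiting case of the scale-invariant Hardy inequality \eqref{eq:newHardy1-3} from Theorem~\ref{thm:2}, by letting the integrability exponent $q\to\infty$. The key observation is that the weight $r_\gamma$ is $1$-homogeneous, so the left-hand side of \eqref{eq:newHardy1-3} is the $L^q$-norm of $u/r_\gamma$ with respect to Lebesgue measure, and the right-hand side is (a power of) the $L^q$-norm of $|\nabla u|$; thus \eqref{eq:newHardy1-3} reads $\|u/r_\gamma\|_q \le C_q^{1/q}\,\|\nabla u\|_q$. The heart of the matter is the growth of the constant: by Theorem~\ref{thm:2} (precisely, the second branch of \eqref{easyC}), for $q\ge 2$ we have $C_q \le C(\gamma)^q$, so $C_q^{1/q}\le C(\gamma)$ is \emph{bounded uniformly in $q$}. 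Taking $q\to\infty$ then yields $\|u/r_\gamma\|_\infty \le C(\gamma)\,\|\nabla u\|_\infty$.

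To make the limiting argument precise I would proceed as follows. Fix $u\in C^\infty_{0\gamma}(\R^{|\gamma|}\setminus Y(\gamma))$; then $u$ and $\nabla u$ are bounded with support in a fixed compact set $K\subset \R^{|\gamma|}\setminus Y(\gamma)$, on which $r_\gamma$ is bounded above and bounded below away from zero, so $u/r_\gamma\in L^\infty\cap L^1$ with compact support. First I would recall the standard fact that for a function $f\in L^{q_0}(\R^{|\gamma|})$ with bounded support one has $\|f\|_q\to\|f\|_\infty$ as $q\to\infty$ (monotone rearrangement/Chebyshev argument; the measure of $\{|f|>\|f\|_\infty-\varepsilon\}$ is positive, giving a lower bound, while $\|f\|_q\le \|f\|_\infty\,|K|^{1/q}$ gives the upper bound). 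Applying this to $f=u/r_\gamma$ on the left and to $f=|\nabla u|$ on the right of $\|u/r_\gamma\|_q\le C(\gamma)\|\nabla u\|_q$ and passing to the limit gives \eqref{eq:Hardy-infty} with the same constant $C(\gamma)$.

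A couple of technical points must be handled with care. One is that $\|\nabla u\|_q\to\|\nabla u\|_\infty$ requires the gradient to have bounded support, which is true here since $u\in C^\infty_{0\gamma}(\R^{|\gamma|}\setminus Y(\gamma))$. Another is the \emph{uniformity} of the constant: I must invoke the explicit estimate $C_3\le (C(\gamma)/q)^q$ from Lemma~\ref{lem:4} (equivalently the factor $q^q$ absorbed there), which gives $C_q=C_3 q^q\le C(\gamma)^q$, hence $C_q^{1/q}\le C(\gamma)$ independently of $q$; without this the limit would blow up. I expect this uniform control of the constant — rather than the soft $L^q\to L^\infty$ convergence — to be the only real content, and it is already supplied by Lemma~\ref{lem:4}. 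Finally, one may remark that the same conclusion also follows directly by letting $p\to\infty$ in the CKN inequality \eqref{eq:CKN} of Theorem~\ref{thm:CKN} with $q$ fixed (say $q=2$) and then reinterpreting, but the cleanest route is the one above, treating \eqref{eq:Hardy-infty} as the $q=\infty$ endpoint of \eqref{eq:newHardy1-3}.
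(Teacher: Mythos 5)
Your proposal is correct and follows exactly the paper's own route: the paper proves Corollary~\ref{cor:gradient} by passing to the limit $q\to\infty$ in \eqref{eq:newHardy1-3}, using that the constant satisfies $C\le C(\gamma)^q$ for $q\ge 2$ so that $C^{1/q}$ is bounded uniformly in $q$. You have simply supplied the standard details (compact support away from $Y(\gamma)$, $\|f\|_q\to\|f\|_\infty$) that the paper leaves implicit.
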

\begin{proof} Since the constant in \eqref{eq:newHardy1-3} is independent of $q$ we may pass to the limit as $q\to\infty$. The second statemt follows immediately from the first one and the definition of $r_\gamma$. \qed
\end{proof}

Note that the pointwise estimate of functions in $\dot{H}_{0,\gamma}^{1,\infty}(\R^{|\gamma|})$ given by Corollary~\ref{cor:gradient} is 
of the form $|u(r_{1},\dots,r_{m)}|\le Cr_{1}^{\alpha_{1}}\dots r_{m}^{\alpha_{m}}$
with $\alpha_{i}\in[0,1]$, $\sum\alpha_{i}=1$, which is sharper
than $|u(x)|\le C|x|$ for general Lipschitz functions. This is a consequence of having the
zero value on $\{r_{1}\dots r_{m}=0\}$.

{\em Proof of Theorem \ref{thm:1}.}

1. We prove first that under assumptions of the theorem  $C^\infty_{0,\gamma}(\R^N\setminus Y(\gamma))$ is dense in $\dot{H}_{\gamma}^{1,q}(\R^{|\gamma|})$. 
   
Let $H^{1,q}(\R^N)$ denote the usual inhomogeneous Sobolev space on $\R^N$ and $H_\gamma^{1,q}(\R^N)$ its subspace consisting of $SO(\gamma)$ invariant functions. It is known that  $H_\gamma^{1,q}(\R^N)$ is a complemented subspace of $H^{1,q}(\R^N)$, cf. \cite{LS}. Moreover,     $C^\infty_{0,\gamma}(\R^N)$ is dense in $H_\gamma^{1,q}(\R^N)$ and the dual space $(H_\gamma^{1,q}(\R^N))'$ can be identified with $H_\gamma^{-1,q'}(\R^N)$. 

Every Cauchy sequence in  $H^{1,q}(\R^N)$ is a Cauchy sequence in the sense of the gradient norm. Therefore  $H_\gamma^{1,q}(\R^N)$ can be embedded into   $\dot{H}_{\gamma}^{1,q}(\R^{|\gamma|})$. In consequence it is sufficient to show that $C^\infty_{0,\gamma}(\R^N\setminus Y(\gamma))$ is dense in 
$H_\gamma^{1,q}(\R^N)$. 
%By duality the space dual space $(\dot{H}_{\gamma}^{1,q}(\R^{|\gamma|}))'$ can be  embedded into $W_\gamma^{-1,q'}(\R^N)$. 
Let us assume  that the space $C^\infty_{0,\gamma}(\R^N\setminus Y(\gamma))$ is not  dense.  %in  $\dot{H}_{\gamma}^{1,q}(\R^{|\gamma|})$. 
By the Hahn-Banach theorem  there exists $f\in H_\gamma^{-1,q'}(\R^N)$, $f\not= 0$, such that $f(\varphi)=0$ for any $\varphi\in C^\infty_{0,\gamma}(\R^N\setminus Y(\gamma))$.  But this means that there exists a non-zero element $f$ of $H_\gamma^{-1,q'}(\R^N)$ with $\supp f \subset Y(\gamma)$. 
This implies that the inner capacity $\underline{\mathrm{Cap}}(Y(\gamma), H^{1,q})$ is strictly positive (see  Chapter 13.2 in  \cite{Mazya-book}). On the other hand $N-q\ge N-\gamma_k$ for any $\gamma_k\ge 2$. So the Hausdorff measure ${\mathcal H}_{N-q}$ of any compact subset of $Y(\gamma)$ is finite. In consequence   Proposition 10.4.3/3 and Theorem 13.3/2 in \cite{Mazya-book} imply $\underline{\mathrm{Cap}}(Y(\gamma), H^{1,q})=0$. This give us the contradiction.

2. Let now $u\in C^\infty_0(\R^{|\gamma|})$. We define   its   iterated spherical rearrangement $u^\star$  as follows. Let $u^\star_1(r_1, x_{\gamma_1+1},x_N)$ be the symmetric-decreasing rearrangement of $u(\cdot, x_{\gamma_1+1},\dots ,x_N)$ in $\R^\gamma_1$ with the values of $x_{\gamma_1+1},\ldots ,x_N$ fixed. Let us assume that 
$$
u^\star_{i-1}(r_1,\dots,r_{i-1}, x_{\sum_{j=1}^{i-1}\gamma_j+1},\dots ,x_N)
$$
is already described.  The function $u^\star_{i-1}$ admits a finite value at any point $(r_1,\dots,r_{i-1}, x_{\sum_{j=1}^{i-1}\gamma_j+1},\dots ,x_N)$. 
We define  
 $$
 u^\star_i(r_1,\dots,r_i, x_{\sum_{j=1}^i\gamma_j+1},\dots ,x_N)
 $$ 
to  be the symmetric-decreasing rearrangement of $u^\star_{i-1}$ in $\R^{\gamma_i}$ with the values of $r_1,\dots, r_{i-1}$ and  $x_{\sum_{j=1}^i\gamma_j+1},\ldots ,x_N$ fixed. Finally, set $u^\star = u_m^\star$. 

By the Fubini theorem and the Polya-Szeg\"o inequality applied  consecutively to $u^\star_i$, $i=m,m-1,\dots 1$ we get 
%and  we get , applied at each iteration step in the corresponding $\R^{\gamma_i}$, followed by integration with %respect to remaining variables, we get 
\begin{align}
	\label{PSit}
\int_{\R^N}|\nabla u^\star|^qdx & =  \int_{\ldots} \int_{\R^{\gamma_m}}|\nabla u_m^\star|^qdx \le 
\int_{\ldots} \int_{\R^{\gamma_m}}|\nabla u_{m-1}^\star|^q dx \\ 
& = \int_{\R^N}|\nabla u_{m-1}^\star|^qdx \le\; \ldots\;  \le \int_{\R^N}|\nabla u|^qdx, \nonumber 
\end{align}
which implies that  $u^\star\in \dot{H}_{\gamma}^{1,q}(\R^{|\gamma|})$. 

Now by the Hardy-Littlewood inequality, applied at the $i$th block we get  
\begin{align}
&\int_{\R^{\gamma_i}}\frac{|u_{i-1}(r_1,\dots,r_i-1,y, x_{\sum_j=1^i\gamma_j+1},x_N)|^q}{|y|^{q\frac{\gamma_i-1}{|\gamma|-m}}}dy \le \\
&\qquad\qquad\qquad\qquad\le \int_{\R^{\gamma_i}}\frac{|u_i^\star(r_1,\dots,r_i, x_{\sum_j=1^i\gamma_j+1},x_N)|^q}{|y|^{q\frac{\gamma_i-1}{|\gamma|-m}}}dy, \nonumber 
\end{align}
where $y$ denotes the variables of the $i$th block (i.e. $|y|=r_i$). Please note that the function $y\mapsto |y|^{q\frac{\gamma_i-1}{|\gamma|-m}}$ is radial and  decreasing. 
Once more using the Fubini theorem   we arrive at 
\begin{equation}
\label{HLit}
\int_{\R^N}  \frac{|u(x)|^q}{r_\gamma^q(x)}dx\le \int_{\R^N}  \frac{|u^\star|^q}{r_\gamma^q(x)}dx. 
\end{equation}
Combining \eqref{PSit} with \eqref{HLit} we get \eqref{eq:newHardy1-3-1}. 
\hfill $\Box$

\subsection{Inequalities of Caffarelli-Kohn-Nirenberg type.} We prove here Theorem \ref{thm:CKN} and Proposition \ref{counterex}.

{\em Proof of Theorem \ref{thm:CKN}.}
Let $u\in C^\infty_{0,\gamma}(\R^{|\gamma|})$. Changing  variables we get by the block-radiality of the function $u$ that $|\nabla_x u(x)| = |\nabla_{r_1,\ldots , r_m} u(r_1,\ldots, r_m)|$ and 
\begin{equation}\label{CKN01}
\int_{\R^{|\gamma|}}|\nabla u(x)|^{q}dx  = C \int_0^\infty\hspace{-2mm} \ldots \int_0^\infty |\nabla u(r_1,\ldots, r_m)|^q r_1^{\gamma_1-1}\hspace{-2mm}\ldots r_m^{\gamma_m-1} dr_1\ldots dr_m . 
\end{equation}
Let %$R(x) = r_{1}(x)^{\gamma_{1}-1}r_{2}(x)^{\gamma_{2}-1}\dots r_{m}(x)^{\gamma_{m}-1}$ and let 
$\omega=\{(r_1,\ldots, r_m)\in\R^{m}:\, r_1^{\gamma_1-1}\cdot \ldots \cdot r_m^{\gamma_m-1}\in(1,2^{|\gamma|-m})\}$. 
Then $\omega$ is a domain in $\R^m$ with a uniformly Lipschitz boundary, and therefore it is an extension domain, \cite[Theorem 12.15]{Leo}. This means that the Sobolev space $H^{1,q}(\omega)$ defined by restrictions can be embedded into $L^p(\omega)$:
\begin{equation}\label{CKN02}
\left(\int_{\omega}|u|^{p}dr_1\ldots dr_m \right)^{\frac{q}{p}} \le C \left(\int_{\omega}|\nabla u|^{q} dr_1\ldots dr_m + \int_{\omega}|u|^{q}dr_1\ldots dr_m\right).	
\end{equation}
Moreover, if  $q\not= m $ then the constant is independent of $p$. For $q>m$ this follows from the embedding of $H^{1,q}(\omega)$ into $L^\infty(\omega)$, and  for $q<m$ it follows from the Talenti's results \cite{Talenti}, cf. also \cite[Corollary 11.9]{Leo}. In both cases the uniform constant is a consequence of the H\"older inequality and the embedding at the endpoint values $p=q$ and $p=q^*$ (understood as $p=\infty$ when $q>m$. Note that there is no endpoint embedding at $q=m$), and thus, necessarily, there is no uniform $L^p$-bound. 
Let $\Omega=\{x\in\R^{|\gamma|}:\, r_\gamma(x)\in(1,2)\}$. Then $x\in \Omega$ if and only if $(r_1(x),\ldots, r_m(x))\in \omega$. Now the definition of $\omega$, \eqref{CKN01}, and \eqref{CKN02} impliey 
\begin{equation}
	\left(\int_{\Omega}|u|^{p}dx\right)^{\frac{q}{p}}\le C\left(\int_{\Omega}|\nabla u|^{q}dx+\int_{\Omega}|u|^{q}dx\right) ,
\end{equation}
and the constant $C$ is independent of $p$ if $q\not=m$.

Rescaling $\Omega$ by the factor $2^{-j}$, $j\in\Z$, we have 
\[
C\left(2^{|\gamma|j}\int_{2^{-j}\Omega}|u|^{p}dx\right)^{\frac{q}{p}}\le2^{(|\gamma|-q)j}\int_{2^{-j}\Omega}|\nabla u|^{q}dx+2^{|\gamma|j}\int_{2^{-j}\Omega}|u|^{q}dx.
\]
Note that $1\le(2^j r_\gamma(x))^{|\gamma|-m}\le2$ whenever $x\in2^{-j}\Omega$.
So multiplying the above inequality  by $2^{(q-|\gamma|)j}$, replacing
the powers of $2$, taken under the integral, by appropriate powers
of $r_\gamma(x)$, and adding up the inequality over $j\in\Z$, we get 
\[
\left(\int_{\Omega}\frac{|u|^{p}}{r_\gamma(x)^{|\gamma|(1-p/q)+p}}dx\right)^{\frac{q}{p}} \le C\left(\int_{\R^{|\gamma|}}|\nabla u|^{q}dx+\int_{\R^{|\gamma|}}\frac{|u|^{q}}{r_\gamma(x)^{q}}dx\right).
\]
By Theorem \ref{thm:2}, the right hand side is bounded by $C\|\nabla u\|_{q}^{q}$. 

The argument extending the inequality to the space $\dot{H}_\gamma^{1,q}(\R^{|\gamma|})$ is the same as in the first  part of the proof of Theorem~\ref{thm:1}. If moreover $\frac 1q -  \frac {1}{|\gamma|}<\frac 1p $ one can also use the second part of the proof of Theorem~\ref{thm:1}, and extend the inequality to $\dot{H}_\gamma^{1,q}(\R^{|\gamma|})$. The details are omitted. 
\hfill $\Box$

{\em Proof of Proposition~\ref{counterex}.} It suffices to show that there exists a sequence $u_k\in C_{0,\gamma}^{\infty}(\R^4)\subset \dot{H}_\gamma^{1,3}(\R^4)$, $\gamma=(2, 2)$, such that the right hand side of the inequality \eqref{eq:newHardy1-3-2} goes to zero while the left hand side remain bounded from below  by  a positive number.   

Block radial functions from $\dot{H}_\gamma^{1,3}(\R^4)$ are unambiguously defined by functions on $\{(r_1,r_2): r_1,r_2\ge 0\}$. Let $(r,\theta)$, $0\le \theta< \frac \pi 2$, $r>0$, be the polar coordinates in this quadrant. 
Let  $\psi_k\in C^\infty_0(\R)$ be a sequence of positive functions of the radial variable $r$ supported near some $r_0> 1$, and let $\varphi_{k}$ be a sequence of positive, smooth compactly supported  functions of the angular variable  $\theta$. We put $u_k(r,\theta)= \varphi_k(\theta)\psi_k(r)$. Then 
\begin{align*}
\| u_k \|_{1,3}^3 = c\int\hspace{-2mm}\int
\Big( \psi'_k(r)^2 \varphi_k(\theta)^2 + \frac{1}{r^2} \varphi'_k(\theta)^2 \psi_k(r)^2 \Big)^{\frac 3 2} r^3 \sin (2\theta) dr d\theta 
\end{align*}
To estimate the above norm it is sufficient to estimate the expressions
\begin{align*}
\int|\psi'_k(r)|^3 r^3 dr \int \varphi_k(\theta)^3 \sin(2\theta) d\theta\quad \text{and}\quad 
\int \psi_k(r)^3 dr  \int|\varphi'_k(\theta)|^3 \sin (2\theta) d\theta  .
\end{align*}

Now we choose the suitable functions $\varphi_k$ and $\psi_k$. Let $\alpha\in C^\infty_0(\R)$ be a smooth function such that $0\le\alpha(t)\le 1$, $\supp \alpha =[0,1]$ and $\alpha(1/2)= 1$. We take  $\varphi_k(\theta)= \alpha(k\theta)\theta^{-\frac 1 6}$. Similarly we take $\phi\in C^\infty_0(\R)$ such that $0\le\psi(t)\le 1$, $\supp \psi =[-1,1]$ and $\psi(0)= 1$. We take  $\psi_k(r)= \psi(k^2(r-r_o))$, $r_o>1$.
Then 
\begin{align}\label{counter1}
\sup_x r_{\gamma}(x)^{N-q}|u_k(x)|^q = & \,c\, \sup_{(r,\theta)} r (\sin 2\theta)^{\frac 1 2 } \psi_k(r)^3 \alpha(k\theta)^3 \theta^{-\frac{1}{2}} \\
\sim & \, c\, \sup_{(r,\theta)} r  \psi_k(r)^3 \alpha(k\theta)^3=cr_o\psi(0)\alpha(1/2) =cr_o\,  >\,  0. \nonumber
\end{align}
On the other hand 
\begin{align}\label{counter2}
&\int(\psi'_k(r))^3 r^3 dr\int \varphi_k(\theta)^3 \sin(2\theta) d\theta \sim   \\ 
&\qquad \int(\psi'(k^2(r-r_o)))^3 (k^2r)^3 dr 
\int \alpha(k\theta)^3 \theta^{\frac{1}{2}} d\theta \le C k^{-\frac{7}{2}},\nonumber   
\intertext{and}
&\int \psi_k(r)^3 dr \int(\varphi'_k(\theta))^3 \sin(2\theta) d\theta \le \label{counter3}\\
&\qquad C k^{-2} \Big(\int k^3 |\alpha'(k\theta)|^3 \theta^{\frac{1}{2}} d\theta + \int \alpha(k\theta)\theta^{-\frac{1}{6}}d\theta\Big) \le C k^{-\frac{1}{2}} \nonumber
\end{align} 
Now \eqref{counter1}-\eqref{counter3} prove the proposition. \hfill $\Box$

%%%%%%%%%%%%%%%%%%%%%%%%%%
%%%%%%%%%%%%%%%%%%%%%%%%%%%%%%%5
\section{Convergence properties of $m$-radial functions}

We have the following corollary of Theorem \ref{thm:CKN}, showing
that vanishing of a sequence of $m$-radial functions in $L^{q^{*}}$
implies vanishing in a weighted $L^{\sigma}$ for an interval of $\sigma$
that extends above $q^{*}$.
%%%%%%%%%%%%%%%%%%%%%%%%%
\begin{thm}\label{leb}
Let $1<m<N$, $1\le q<N$, and let $\gamma_{i}\neq1$
whenever $q=1$. If $(u_{k})$ is a bounded sequence in $\dot{H}_{0,\gamma}^{1,q}(\R^{|\gamma|})$
and $u_{k}\to0$ in $L^{q^{*}}(\R^{|\gamma|})$, then 
\begin{equation}
\int_{\R^{|\gamma|}}\,\dfrac{|u_{k}(x)|^{\sigma}}{r_\gamma^{|\gamma|\frac{q-\sigma}{q}+\sigma}}dx\to0\label{eq:p}
\end{equation}
for any $ $ $\sigma\in(q,\frac{qm}{m-q})$ if $q<m$ or for any $\sigma>q$
otherwise.
\end{thm}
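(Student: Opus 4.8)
\textbf{Proof plan for Theorem \ref{leb}.}

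The plan is to combine the Caffarelli--Kohn--Nirenberg inequality \eqref{eq:CKN} (which gives a uniform weighted $L^p$-bound on the sequence for the full range $p\in(q,q_m^*]$, or $p\in(q,\infty)$ when $q>m$) with the hypothesis $u_k\to 0$ in $L^{q^*}$, and interpolate. First I would fix $\sigma$ in the stated range and observe that the target integrand can be written as a product in a way that splits the $L^{q^*}$-decay off from a bounded weighted factor. Concretely, choose an exponent $p$ with $\sigma<p$ and $p$ still admissible in \eqref{eq:CKN} (so $p\le q_m^*$ if $q<m$, otherwise $p$ arbitrary; note that $\sigma<q_m^*$ is exactly the hypothesis when $q<m$, and $\sigma>q$ is all we need when $q\ge m$), together with the threshold exponent $q^*$. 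Write $\sigma=\lambda q^*+(1-\lambda)p$ for the appropriate $\lambda\in(0,1)$; since $q<q^*<\infty$ and $\sigma<p$, such a convex combination with $\lambda\in(0,1)$ exists, and one checks that the weight exponent $|\gamma|\frac{q-\sigma}{q}+\sigma$ also decomposes compatibly, i.e. the pure power $r_\gamma^{-(|\gamma|(\frac1\sigma-\frac1q)+1)\sigma}$ splits as the corresponding product of the $L^{q^*}$-weight (which is trivial, exponent zero, since $q^*$ is the scale-critical Sobolev exponent) and the $L^p$-CKN weight raised to the right power. This is the one genuinely computational point, but it is just bookkeeping with the homogeneity-$1$ weight $r_\gamma$ and the scaling relation $|\gamma|(\frac1p-\frac1q)+1$.

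Having arranged the splitting, I would apply Hölder's inequality with exponents $\frac{1}{\lambda}$ and $\frac{1}{1-\lambda}$ to the factored integrand, obtaining
\begin{equation*}
\int_{\R^{|\gamma|}}\frac{|u_k|^\sigma}{r_\gamma^{|\gamma|\frac{q-\sigma}{q}+\sigma}}\,dx
\;\le\;
\Big(\int_{\R^{|\gamma|}}|u_k|^{q^*}dx\Big)^{\lambda}
\Big(\int_{\R^{|\gamma|}}\Big(\frac{|u_k|}{r_\gamma^{|\gamma|(\frac1p-\frac1q)+1}}\Big)^p dx\Big)^{(1-\lambda)q/(pq')},
\end{equation*}
where the precise exponent on the second factor is whatever Hölder dictates; the point is only that the first factor is exactly $\|u_k\|_{q^*}^{\lambda q^*}\to 0$ by hypothesis, while the second factor is bounded uniformly in $k$ by Theorem \ref{thm:CKN} applied to the bounded sequence $(u_k)\subset\dot H^{1,q}_{0,\gamma}$, using that the constant in \eqref{eq:CKN} is independent of $p$ when $q\neq m$ (and for $q=m$ the exponent $p$ is simply fixed once and for all, so uniformity is irrelevant). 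Hence the left side tends to $0$.

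The only subtlety — and the step I expect to need the most care — is the endpoint $q=m$, where $q_m^*=\infty$ formally but there is no uniform CKN bound and $q^*=\frac{qm}{m-q}$ is not defined; however when $q=m<N$ we still have $q<N$ so $q^*=\frac{qN}{N-q}$ is finite and the $L^{q^*}$-hypothesis is meaningful, and for any fixed $\sigma>q$ we may simply pick one fixed $p>\sigma$, apply \eqref{eq:CKN} with that single $p$ (the $p$-dependence of the constant is harmless since $p$ is not varying), and run the same interpolation. I would also note at the outset that the hypotheses $q<N$ and $\gamma_i\neq 1$ when $q=1$ are exactly what make the $L^{q^*}$ Sobolev embedding and the Hardy inequality \eqref{eq:newHardy1-3} available, so no extra work is needed there. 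Finally, the case $\sigma$ close to $q$ presents no difficulty: $\lambda\to 1^-$ and the argument degenerates continuously.
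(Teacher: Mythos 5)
Your overall strategy (Hölder interpolation between the vanishing $L^{q^*}$-norm and a bounded weighted quantity supplied by the CKN inequality) is the paper's strategy, and your treatment of the supercritical range $\sigma>q^*$, including the $q=m$ endpoint discussion, matches cases (ii)--(iii) of the paper's proof. But there is a genuine gap for $\sigma\in(q,q^*]$, which is a nonempty part of the claimed range (recall $q^*=\frac{q|\gamma|}{|\gamma|-q}<\frac{qm}{m-q}$ since $m<|\gamma|$). You insist on choosing $p>\sigma$ and then writing $\sigma=\lambda q^*+(1-\lambda)p$ with $\lambda\in(0,1)$. Such a $\lambda$ exists only when $\sigma$ lies strictly between $q^*$ and $p$; if $\sigma\le q^*$ and $\sigma<p$, every convex combination of $q^*$ and $p$ exceeds $\sigma$, so the splitting $|u|^\sigma=|u|^{\lambda q^*}|u|^{(1-\lambda)p}$ with both exponents positive is impossible. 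Your justification ``since $q<q^*<\infty$ and $\sigma<p$, such a convex combination exists'' is simply not a sufficient condition. Relatedly, the closing remark that as $\sigma\to q^+$ one has $\lambda\to1^-$ is backwards: near $\sigma=q$ the exponent carried by the vanishing $L^{q^*}$-factor must tend to $0$, not $1$.

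The repair is exactly what the paper does: for $q<\sigma<q^*$ take the \emph{lower} interpolation endpoint to be the weighted $L^q$ quantity $\int|u_k|^q r_\gamma^{-q}\,dx$ (i.e.\ \eqref{eq:CKN} with $p=q$, which is the Hardy-type inequality \eqref{eq:newHardy1-3}), bounded uniformly by Theorem~\ref{thm:2}, and interpolate it against $\|u_k\|_{q^*}^{q^*}\to0$; the weight exponent $|\gamma|\frac{q-\sigma}{q}+\sigma$ is affine in $\sigma$, equals $q$ at $\sigma=q$ and $0$ at $\sigma=q^*$, so the bookkeeping goes through verbatim. With that third case added (the paper's case (i)), your argument is complete and coincides with the paper's.
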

(We recall that $r_\gamma$ is defined in \eqref{weight}.) 
%%%%%%%%%%
\begin{proof} Let $I_\sigma(u)$ be the following expression, respectively:
\begin{enumerate} %{list}      
 \item[(i)] the left hand side of (\ref{eq:newHardy1-3}), when $q\le\sigma<q^{*}$;
 \item[(ii)] the left hand side of (\ref{eq:CKN}) with $p=\frac{qm}{m-q}$, when $q<m$ and $q^{*}<\sigma<\frac{qm}{m-q}$;
 \item[(iii)] the left hand side of (\ref{eq:CKN}) with any $p>\sigma$, when $m\ge q$ and $\sigma>q^{*}$.
\end{enumerate} %{list}
Note that in the first case $I_\sigma$ is bounded by Theorem~\ref{thm:2}, and in the second and third case by Theorem~\ref{thm:CKN}.
In all three cases the integral in (\ref{eq:p}) is an interpolation
by H\"older inequality between $I_\sigma$ and 
 $\|u_{k}\|_{q^{*}}$, which converges to zero, which proves the theorem. \qed 
\end{proof}

We now formulate a preliminary result on defect of convergence in the space 
$\dot{H}_{\gamma}^{1,q}(\R^{|\gamma|})$, based on restriction of
Solimini's profile decomposition in $\dot{H}^{1,q}(\R^{|\gamma|})$, cf. 
\cite{Solimini},  to the $m$-radial subspace.
\begin{thm}
\label{thm:pd} Assume that $1<m<N$, $1<q<N$ and that $\gamma_i\ge 2$ for every $i=1,\dots,m$. Let $(u_{k})\subset\dot{H}_{\gamma}^{1,q}(\R^{|\gamma|})$
be a bounded sequence. Then there exists a renamed subsequence of
$(u_{k})$, sequences $(j_{k}^{(n)})_{k\in\N}\subset\Z$ and functions
$w^{(n)}\in\dot{H}_{\gamma}^{1,q}(\R^{|\gamma|})$, $n\in\N$, such
that 
\begin{align}
& |j_{k}^{(n)}-j_{k}^{(m)}|\to\infty\; \text{ whenever }\; m\neq n, \nonumber \\
& 2^{-\frac{|\gamma|-q}{q}j_{k}^{(n)}}u_{k}(2^{-j_{k}^{(n)}}\cdot)\rightharpoonup w^{(n)},\nonumber\\
& u_{k}(x)=\sum_{n\in\N}2^{\frac{|\gamma|-q}{q}j_{k}^{(n)}}w^{(n)}(2^{j_{k}^{(n)}}x)+r_{k},\label{eq:pd}
\end{align}
where $r_{k}\to0$ in $L^{p}(\R^{|\gamma|},r_\gamma^{-|\gamma|\frac{q-p}{q}+p})$
for each $p>q$ if $q\ge m$ and for each $p\in(q,\frac{qm}{m-q})$
otherwise.
Moreover, the series in \eqref{eq:pd} converges in $\dot{H}_{\gamma}^{1,q}(\R^{|\gamma|})$ unconditionally, uniformly in $k$ and

\begin{equation} 
\sum_{n\in\N}\|\nabla w^{(n)}\|_q^q\le \|\nabla u_{k}\|_q^q+o(1).
\label{eq:norms}
\end{equation}
\end{thm}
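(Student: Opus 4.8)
\textbf{Proof proposal for Theorem~\ref{thm:pd}.}
The plan is to obtain the decomposition by restricting Solimini's profile decomposition in $\dot{H}^{1,q}(\R^{|\gamma|})$ to the $m$-radial subspace, then upgrade the remainder control to the weighted norms using the results of Section~3. First I would apply Solimini's theorem \cite{Solimini} to the bounded sequence $(u_k)\subset\dot{H}^{1,q}(\R^{|\gamma|})$: after passing to a subsequence, we get concentration parameters, which in the dilation-invariant homogeneous setting are pairs (scale, location) in $(0,\infty)\times\R^{|\gamma|}$, together with profiles, such that the asymptotic orthogonality relations hold and the remainder vanishes in $L^{q^*}$. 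The key structural point is that because every $u_k$ is $SO(\gamma)$-invariant and the subspace $\dot{H}_\gamma^{1,q}$ is complemented (hence the profile extraction, which is canonical, respects the symmetry), the translation components of the concentration profiles must be trivial — a nonzero translation would break invariance under $SO(\gamma)$, which acts with unbounded orbits on any point off the common fixed-point set, and the profiles inherit $SO(\gamma)$-invariance. Thus the concentration parameters reduce to scales only, which we may take of the dyadic form $2^{-j_k^{(n)}}$ with $|j_k^{(n)}-j_k^{(m)}|\to\infty$ for $n\neq m$, and the profiles $w^{(n)}$ lie in $\dot{H}_\gamma^{1,q}(\R^{|\gamma|})$; this gives the first three displayed lines of \eqref{eq:pd} and the energy inequality \eqref{eq:norms} is the standard Brezis--Lieb/Solimini energy splitting.

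It remains to promote the convergence $r_k\to 0$ from $L^{q^*}$ to the weighted norms $L^p(\R^{|\gamma|},r_\gamma^{-|\gamma|\frac{q-p}{q}+p})$ for the stated range of $p$. For this I would argue exactly as in Theorem~\ref{leb}: the remainder $r_k$ is bounded in $\dot{H}_\gamma^{1,q}$ (uniform boundedness of the partial sums follows from \eqref{eq:norms} and the unconditional convergence of the series), hence by Theorem~\ref{thm:CKN} the quantity $I_p(r_k)$ — the left side of \eqref{eq:CKN} with the relevant exponent, or of \eqref{eq:newHardy1-3} when $p<q^*$ — is bounded uniformly in $k$; interpolating by Hölder between $I_p(r_k)$ and $\|r_k\|_{q^*}\to 0$ gives the claimed weighted convergence for every $p>q$ when $q\ge m$ and for $p\in(q,\frac{qm}{m-q})$ when $q<m$. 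The unconditional and $k$-uniform convergence of the series in $\dot{H}_\gamma^{1,q}$ is inherited directly from Solimini's theorem, since the $m$-radial profiles are a sub-collection of the general profiles with the same scales.

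The main obstacle I anticipate is the symmetrization argument that kills the translations: one must verify carefully that Solimini's profile extraction, applied to a symmetric sequence, produces symmetric data, i.e. that the concentration parameters and profiles are equivariant under the diagonal $SO(\gamma)$-action, and then that $SO(\gamma)$-invariance of a nonzero profile is incompatible with a nonzero translation parameter (because the orbit of any $SO(\gamma)$-translation-conjugate of a fixed profile would have to coincide with the profile itself, forcing the translation to lie in the fixed-point set $\{0\}$ of each block rotation — which is only the origin in each $\R^{\gamma_i}$ with $\gamma_i\ge 2$, hence translation $0$). Making this precise may require either revisiting the construction of the profiles (they are weak limits of rescaled-retranslated copies of $u_k$, and one tracks how $SO(\gamma)$ acts on these limits) or invoking a general principle that profile decompositions in a Hilbert/Banach space with a group action restrict to the invariant subspace; the hypothesis $\gamma_i\ge 2$ for all $i$ is exactly what guarantees the fixed-point set of $SO(\gamma)$ is $\{0\}$, so that no nontrivial translations survive. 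I expect the rest to be routine given Theorems~\ref{thm:2}, \ref{thm:CKN} and \ref{leb}.
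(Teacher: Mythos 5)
Your overall skeleton is the same as the paper's: restrict Solimini's decomposition to the $m$-radial subspace, reduce the scales to dyadic form, and upgrade the $L^{q^*}$ decay of the remainder to the weighted norms via Theorem~\ref{leb}. Those last two reductions are fine as you state them. The genuine gap is in the central step, which you yourself flag as the "main obstacle": eliminating the translation parameters. Your proposed mechanism — that the extraction is "canonical, hence respects the symmetry," that the profiles "inherit $SO(\gamma)$-invariance," and that invariance forces the translation into the fixed-point set $\{0\}$ — does not work as stated. A profile extracted along a concentration point $y_k^{(n)}$ off the symmetric set is a weak limit of translated copies of $u_k$ and has no reason to be $SO(\gamma)$-invariant; and for translations with $|y_k^{(n)}|\to\infty$ there is no limiting point $y$ to which a fixed-point argument could even apply. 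Also, the correct conclusion is not that all translations are trivial: translations with $2^{-j_k^{(n)}}y_k^{(n)}\to 0$ may persist and are merely absorbed into the untranslated term.

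What actually closes the gap is a two-case analysis on a subsequence with $j_k^{(n)}=0$ and $|y_k^{(n)}|\ge\epsilon$. If $y_k^{(n)}\to y\neq 0$, then by invariance $u_k(\cdot+\omega y)\rightharpoonup w^{(n)}$ for every $\omega\in SO(\gamma)$; uniqueness of weak limits gives $w^{(n)}(x)=w^{(n)}(x-\omega_1 y+\omega_2 y)$, and since $\{\omega_1 y-\omega_2 y\}$ contains a neighbourhood of the origin (here connectedness of $SO(\gamma)$ and $y\neq0$ are used), $w^{(n)}$ is constant, hence zero. If $|y_k^{(n)}|\to\infty$, one needs the coercivity of the group action (Lemma~\ref{separate-omega}, which is where $\gamma_i\ge 2$ really enters, via $-I$ belonging to each block): the orbit of $y_k^{(n)}$ contains arbitrarily many asymptotically separated points $\omega_k^{(j)}y_k^{(n)}$, each of which is a concentration point for the same profile, so $\|\nabla u_k\|_q^q\ge M\|\nabla w^{(n)}\|_q^q+o(1)$ for every $M$, forcing $w^{(n)}=0$. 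Without this quantitative "mass multiplication" argument the unbounded-translation case is simply unaddressed in your proposal, so the decomposition you write down — with scales only — is not justified.
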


\begin{remark}
It is important to stress that this is not a sharp result in a sense that one may expect that, once {\em all} concentrations are subtracted, the remainder $\omega_{k}$ converges to zero in the endpoint norm with a suitable weight, ($L^{\frac{qm}{m-q}}$ if $m>q$ or a weighted $L^\infty$ if $m<q$). Our conjecture is that "all concentrations" should for this purpose include concentrations on all singular orbits (i.e. orbits of dimension less than $|\gamma|-m$) of the symmetry group $O(\gamma_{1})\times\dots\times O(\gamma_{m})$. For example, assuming $m>q$, the sequence $u_{k}=k^{\frac{m-q}{q}}w(k(r_{1}-1),\dots,k(r_{m}-1))$
with arbitrary $w\neq0$, which consists of the remainder
alone (i.e. $w^{(n)}=0$ for all $n$), does not vanish in the weighted $L^{\frac{qm}{m-q}}$, but concentrates on a singular orbit that is not the origin. 

On the other hand, when $m=1$, the origin is the only singular orbit, and the remainder in \eqref{eq:pd} vanishes in the endpoint norm:
\begin{equation}
\sup_{r>0}r^{\frac{|\gamma|-q}{q}}|\omega_{k}(r)|\to0.\label{eq:dw}
\end{equation}
Indeed, since $\delta(r-1)$ is a continuous functional on $\dot{H}^{1,q}(\R^{|\gamma|})$
when $|\gamma|>q$, and $2^{\frac{|\gamma|-q}{q}j_{k}}\omega_{k}(2^{j_{k}}\cdot)\rightharpoonup0$
for any sequence $(j_{k})$ (as it follows from the argument of Theorem
\ref{thm:pd} repeated for $m=1$, which implies $t_{k}^{\frac{|\gamma|-q}{q}}\omega_{k}(t_{k}\cdot)\rightharpoonup0$
for any sequence $(t_{k})$ of positive numbers, $\langle\delta(\cdot-1),t_{k}^{\frac{|\gamma|-q}{q}}\omega_{k}(t_{k}\cdot)\rangle\to0$
is the same as $t_{k}^{\frac{|\gamma|-q}{q}}\omega_{k}(t_{k})\to0$,
which gives (\ref{eq:dw}). 
\end{remark}
Before we prove Theorem\ref{thm:pd}, we quote a particular case of Lemma~2.1 from \cite{skrti1}, when the manifold $M$ is $\R^{|\gamma|}$, and the group $\Omega$ is $O(\gamma_1)\times\dots\times O(\gamma_m)$. Note that this group is connected, and is coercive in the sense of Definition~1.2 of \cite{skrti1} whenever $\gamma_i\ge 2$ for $=1,\dots,m$, since then it contains $-I$, and therefore the diameter of the orbit of any given point $x$ is at least $2|x|$, which is a coercive function of $x$.
\begin{lem}
\label{separate-omega}
Assume that $\gamma_i\ge 2$ for $=1,\dots,m$. Then for any sequence $(y_k)\subset\R^{|\gamma|}$ such that $|y_k|\to\infty$, there exists  a sequence of elements $\omega_k^{(1)},\dots,$ $\omega_k^{(k)}\in O(\gamma_1)\times\dots\times O(\gamma_m)$ such that a renumbered subsequence of $y_k$ satisfies
$$|\omega^{(m)}_ky_k-\omega^{(n)}_k y_k|\to\infty \qquad \text{whenever} \qquad m\neq n.$$
\end{lem}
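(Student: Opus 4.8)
The plan is to realize the required spreading by rotating a single coordinate block of $y_k$. First I would split $y_k=(y_k^{(1)},\dots,y_k^{(m)})$ with $y_k^{(i)}\in\R^{\gamma_i}$ and use $|y_k|^2=\sum_{i=1}^m r_i(y_k)^2$: for each $k$ some index $i(k)$ satisfies $r_{i(k)}(y_k)^2\ge |y_k|^2/m$, and by the pigeonhole principle one fixed value $i_0\in\{1,\dots,m\}$ occurs as $i(k)$ for infinitely many $k$. Passing to that subsequence and renumbering, I may assume $R_k:=r_{i_0}(y_k)\to\infty$; in particular $y_k^{(i_0)}\ne 0$ once $k$ is large, and I discard the finitely many exceptional terms.

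Next, since $\gamma_{i_0}\ge 2$, for each $k$ I fix a unit vector $w_k\in\R^{\gamma_{i_0}}$ orthogonal to $y_k^{(i_0)}$ and, for $\theta\in\R$, let $\rho_k(\theta)\in SO(\gamma_{i_0})$ be the rotation by angle $\theta$ in the $2$-plane $\mathrm{span}\{y_k^{(i_0)},w_k\}$ and the identity on its orthogonal complement. With the fixed angles $\theta_n=\pi(1-2^{-n})$, $n\in\N$, I define $\omega_k^{(n)}\in O(\gamma_1)\times\dots\times O(\gamma_m)$ to be the element acting as $\rho_k(\theta_n)$ on the $i_0$-th factor and as the identity on every other factor; in particular $\omega_k^{(1)},\dots,\omega_k^{(k)}$ are the first $k$ of these. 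Since these elements move only the $i_0$-th block, $\omega_k^{(m)}y_k-\omega_k^{(n)}y_k$ vanishes outside that block, and writing $y_k^{(i_0)}$ in the orthonormal pair $(y_k^{(i_0)}/R_k,\ w_k)$ as $(R_k,0)$ a two-line computation gives
\[
|\omega_k^{(m)}y_k-\omega_k^{(n)}y_k|=R_k\,\bigl|(\cos\theta_m-\cos\theta_n,\ \sin\theta_m-\sin\theta_n)\bigr|=2R_k\,\Bigl|\sin\tfrac{\theta_m-\theta_n}{2}\Bigr|.
\]

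Finally, for $m<n$ one has $\theta_n-\theta_m=\pi(2^{-m}-2^{-n})\in(0,\pi)$, hence $\sin\tfrac{\theta_n-\theta_m}{2}\ge\sin(\pi 2^{-m-2})=:c_m>0$, a constant independent of $k$ and of $n$. Thus for each fixed pair $m\ne n$ one gets $|\omega_k^{(m)}y_k-\omega_k^{(n)}y_k|\ge 2c_{\min(m,n)}R_k\to\infty$, which is the claim. The one point requiring care — and the reason I do not just take equally spaced points on the circle of radius $R_k$ — is that equal spacing yields consecutive chord lengths of order $R_k/k$, which need not blow up because the growth rate of $R_k$ is uncontrolled; the geometric spacing $\theta_n=\pi(1-2^{-n})$ keeps each fixed angular gap $\theta_n-\theta_m$ bounded below independently of $k$, so that multiplying by $R_k\to\infty$ does the job. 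The hypothesis $\gamma_i\ge 2$ is used only to supply a $2$-plane inside the block $i_0$; the full coercivity framework of \cite{skrti1} is not needed for this special case.
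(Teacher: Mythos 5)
Your proof is correct, but it takes a genuinely different route from the paper. The paper does not prove this lemma directly at all: it quotes Lemma~2.1 of \cite{skrti1} (on coercive actions of connected isometry groups), and merely verifies its hypotheses, namely that $O(\gamma_1)\times\dots\times O(\gamma_m)$ is coercive because it contains $-I$, so the orbit of $x$ has diameter at least $2|x|$. You instead give a self-contained elementary construction: pigeonhole a block $i_0$ carrying a definite fraction of $|y_k|$, rotate only that block in a $2$-plane through $y_k^{(i_0)}$, and choose the geometrically spaced angles $\theta_n=\pi(1-2^{-n})$ so that each fixed pair of angles keeps a $k$-independent gap, whence the chord length $2R_k\bigl|\sin\tfrac{\theta_m-\theta_n}{2}\bigr|\ge 2c_{\min(m,n)}R_k\to\infty$. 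All steps check out: the pigeonhole gives $R_k\ge |y_k|/\sqrt{m}\to\infty$ along a subsequence, the existence of $w_k$ uses exactly $\gamma_{i_0}\ge2$, the chord formula is right, and your observation that equally spaced angles would fail (since $R_k/k$ need not blow up) correctly identifies the one delicate point; the required convergence is only for each fixed pair of indices, which your uniform-in-$k$ angular gap delivers. What the paper's citation buys is generality (the lemma of \cite{skrti1} applies to arbitrary coercive group actions on manifolds) at the cost of opacity; what your argument buys is a transparent, fully explicit proof tailored to the product-of-rotation-groups case, which is all the paper actually needs.
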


We can now prove Theorem\ref{thm:pd}. The proof follows the reduction approach used in \cite{Fieseler-Tintarev}, Proposition~5.1
\begin{proof} The starting point of the proof is the profile decomposition of Solimini \cite{Solimini}, amended by two elementary observations. First, without loss of generality one can replace sequences of general positive numbers $t_k^{(n)}$ with dyadic sequences $2^{j_k^{(n)}}$, $j_k^{(n)}\in\Z$, and, second,  since the remainder in \cite{Solimini} vanishes in $L^{q^*}$, it vanishes in $L^{p}(\R^{|\gamma|},r_\gamma^{-|\gamma|\frac{q-p}{q}+p})$ by Theorem~\ref{leb}. 
After this reduction, Solimini's profile decomposition takes, for a renamed subsequence, the following form
\begin{equation}
u_{k}(x)=\sum_{n\in\N}2^{\frac{|\gamma|-q}{q}j_{k}^{(n)}}w^{(n)}
(2^{j_{k}^{(n)}}x-y_k^{(n)})+r_{k},\label{eq:pd1}
\end{equation}
with $r_k\to 0$ in $L^{q^*}$, and sequences $j_{k}^{(n)}\in \Z$ and $y_{k}^{(n)}\in \R^{(|\gamma|)}$ satisfying the decoupling condition:
\begin{equation}
\label{decouple}
|y_{k}^{(m)}-y_{k}^{(n)}|+|j_{k}^{(m)}-j_{k}^{(n)}|\to\infty
\end{equation}
whenever $m\neq n$.
We will now use the $m$-radial symmetry to prove further restrictions on the terms that may appear in \eqref{eq:pd1}.
Note that if $z_k\to 0$, $w(\cdot-z_k)-w \to 0$ in the Sobolev norm, and of course this remains true if we replace $w$ with $2^{\frac{|\gamma|-q}{q}j_{k}}\bar w
(2^{j_{k}}\cdot-y_k)$ with any $j_k\in\Z$ and $y_k\in \R^{|\gamma|}$.  
Therefore, any of the terms $2^{\frac{|\gamma|-q}{q}j_{k}^{(n)}}w^{(n)}
(2^{j_{k}^{(n)}}x-y_k^{(n)})$ in  \eqref{eq:pd1} with $2^{-j_k^{(n)}}y_k^{(n)}\to 0$ may be replaced with 
$2^{\frac{|\gamma|-q}{q}j_{k}^{(n)}}w^{(n)}
(2^{j_{k}^{(n)}}x)$. To conclude the proof it remains now to show that for no $n\in\N$ there is a renamed subsequence with $2^{-j_k^{(n)}}|y_k^{(n)}|\ge \epsilon$ for some $ \epsilon>0$.
Let us fix such $n$ and a corresponding subsequence, and consider  \eqref{eq:pd1} for  $2^{-\frac{|\gamma|-q}{q}j_{k}^{(n)}}u_k(2^{-j_k^{(n)}}\cdot)$ instead of $u_k$, which allows us, without loss of generality, to assume that 
$j_k^{(1)}=0$ and $|y_k^{(1)}|\ge \epsilon$. 
Assume first that $y_k^{(1)}$ has a bounded subsequence, and, therefore, a renamed subsequence that converges to some point $y\neq 0$. 
Then $u_k(\cdot+y)\rightharpoonup w^{(1)}$, and, by the symmetry of $u_k$, this means that  $u_k(\cdot+\omega y)\rightharpoonup w^{(1)}$ for any $\omega\in SO(\gamma_1)\times\dots\times SO(\gamma_m)$. Since the weak limit is unique, we have $w^{(1)}(x)=w^{(1)}(x-\omega y)$ for each $\omega$ and for all $x\in\R^{|\gamma|}$. Noting that  the set $\{\omega_1y-\omega_2y,\quad\omega_1,\omega_2\in  SO(\gamma_1)\times\dots\times SO(\gamma_m)\}$ contains a neighbourhood of the origin, we conclude that $w^{(1)}$ is a constant, and thus, $w^{(1)}=0$. 

It remains to consider therefore the case   $|y_k^{(1)}|\to \infty$. Since for any $\omega\in SO(\gamma_1)\times\dots\times SO(\gamma_m)$, $u_k(x)=u_k(\omega^{-1} x)$, using this with  $\omega=\omega_k^{(j)}$ provided by  Lemma~\ref{separate-omega} when $y_k=y_k^{(1)}$,
we arrive at $u_k(\cdot+\omega_k^{(j)}y_k^{(1)})\rightharpoonup w^{(1)}$, $j\in\N$,  
with  $|\omega^{(m)}_ky_k^{(1)}-\omega^{(n)}_k y_k^{(1)}|\to\infty$ whenever $m\neq n$, which implies that for any $M>0$, $\|\nabla u_k\|_q^q\ge M\|\nabla w^{(1)}\|_q^q+o(1)$. Therefore, necessarily, $w^{(1)}=0$, which concludes the proof.\qed
\end{proof}
%%%%%%%%%%%%%%%%%%%%%%%%%%%%%%%%%

The following corollary is a slight generalization of the compactness result of Ding \cite{Ding}.
\begin{corollary}
Let $1<m<N$, $1<q<N$ and $\gamma_i\ge 2$ for $i=1,\dots,m$, and assume that $p>q$ if $q\ge m$ and $p\in(q,\frac{qm}{m-q})$ otherwise. 
Then  for any $\sigma\in(q,p)$ the embedding ${H}_{\gamma}^{1,q}(\R^{N})\hookrightarrow L^{\sigma}(\R^{N},r_\gamma^{-(\sigma-q)(\frac{p}{p-q}-\frac{|\gamma|}{q}})$
is compact. In particular, embedding into $L^s(\R^N)$ is compact for all $s\in(q,q^*)$ (as shown in \cite{Ding}).
\end{corollary}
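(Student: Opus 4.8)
The plan is to deduce this corollary from the profile decomposition of Theorem~\ref{thm:pd} together with the weighted vanishing estimate of Theorem~\ref{leb}. First I would take a bounded sequence $(u_k)\subset H_\gamma^{1,q}(\R^N)$; since $H_\gamma^{1,q}$ embeds continuously into $\dot H_\gamma^{1,q}$, the sequence $(\nabla u_k)$ is bounded in $L^q$, so Theorem~\ref{thm:pd} applies and yields, for a renamed subsequence, the decomposition \eqref{eq:pd} with profiles $w^{(n)}$, scales $2^{j_k^{(n)}}$, and a remainder $r_k$ that tends to $0$ in the weighted space $L^p(\R^N,r_\gamma^{-|\gamma|\frac{q-p}{q}+p})$ for the given range of $p$. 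The key extra input from the inhomogeneous norm is that $(u_k)$ is also bounded in $L^q(\R^N)$, which will force all but finitely many profiles to be negligible and, in the end, to vanish entirely, so that the only surviving term is the weak limit at the trivial scale.

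The main steps are as follows. \textbf{Step 1.} Show that a nonzero profile $w^{(n)}$ at a scale $2^{j_k^{(n)}}$ with $|j_k^{(n)}|\to\infty$ is incompatible with $L^q$-boundedness: rescaling changes the $L^q$-norm of the corresponding bump by $2^{-\frac{|\gamma|-q}{q}j_k^{(n)}q/ q}$-type factors, and since $q<N$ the exponent $\frac{|\gamma|-q}{q}\neq 0$, so either $j_k^{(n)}\to+\infty$ or $\to-\infty$ produces a term whose $L^q$-norm blows up, unless $w^{(n)}=0$; a standard almost-orthogonality argument in $L^q$ (using that the bumps live at asymptotically disjoint dyadic scales) then rules out cancellation. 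Hence every profile with an unbounded scale sequence vanishes, and by passing to a subsequence we may assume all surviving $j_k^{(n)}$ are constant, i.e. $j_k^{(n)}\equiv 0$; but distinct profiles must have $|j_k^{(n)}-j_k^{(m)}|\to\infty$, so at most one profile survives, namely the weak limit $w=w^{(1)}$ at scale $1$. \textbf{Step 2.} Thus \eqref{eq:pd} collapses to $u_k=w+r_k$ with $r_k\to0$ in $L^p(\R^N,r_\gamma^{-|\gamma|\frac{q-p}{q}+p})$, so $u_k\to w$ in that weighted $L^p$-space. \textbf{Step 3.} For $\sigma\in(q,p)$, interpolate by H\"older between the weighted $L^p$-convergence of $u_k-w$ and the $L^q$-boundedness of $u_k-w$ (here one must check that the weight exponent $-(\sigma-q)(\frac{p}{p-q}-\frac{|\gamma|}{q})$ in the statement is exactly the convex combination of the two, which is a routine exponent bookkeeping with H\"older weights $\frac{p-q}{p-\sigma}$ and $\frac{\sigma-q}{p-\sigma}$ appropriately arranged). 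This gives $u_k\to w$ in $L^\sigma(\R^N,r_\gamma^{-(\sigma-q)(\frac{p}{p-q}-\frac{|\gamma|}{q})})$, which is the asserted compactness.

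For the final sentence, specialize to $p$ slightly below $q^*$ (permissible since $q^*<\frac{qm}{m-q}$ when $q<m$, and $q^*<\infty$ always) and $\sigma=s\in(q,q^*)$; one checks that for $p=q^*$ the weight exponent $-(\sigma-q)(\frac{q^*}{q^*-q}-\frac{|\gamma|}{q})$ equals zero because $\frac{q^*}{q^*-q}=\frac{|\gamma|}{q}$ (a direct computation from $q^*=\frac{q|\gamma|}{|\gamma|-q}$), so the weighted space reduces to plain $L^s(\R^N)$, recovering Ding's result; one small point is that $p=q^*$ itself is not in the allowed range, so one argues with $p=q^*-\epsilon$ and lets $\epsilon\to0$, or simply notes that the weighted estimate with $p$ close to $q^*$ already implies $L^s$ compactness for $s<q^*-\epsilon$, and $\epsilon$ is arbitrary.

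I expect \textbf{Step 1} to be the only real obstacle: one has to be careful that the $L^q$-bound (which is \emph{not} scale invariant, unlike the gradient norm) genuinely kills the non-trivial scales, and to invoke the correct almost-orthogonality in $L^q$ for the sum of dyadically-separated profiles — this is where the hypothesis $q<N$ (ensuring $|\gamma|-q>0$) and the unconditional, $k$-uniform convergence of the series in Theorem~\ref{thm:pd} are used. The rest is routine interpolation and exponent arithmetic, and I would relegate those computations to a remark or leave them to the reader, as the authors do elsewhere in the paper.
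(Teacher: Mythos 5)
Your overall strategy (profile decomposition from Theorem~\ref{thm:pd} plus the extra $L^q$-bound from the inhomogeneous norm, then H\"older interpolation down to $\sigma\in(q,p)$) is the paper's strategy, and your Step~3 and the Ding specialization at the end are correct. But Step~1 contains a genuine error. The $L^q$-norm of the rescaled bump $2^{\frac{|\gamma|-q}{q}j_k^{(n)}}w^{(n)}(2^{j_k^{(n)}}\cdot)$ equals $2^{-j_k^{(n)}}\|w^{(n)}\|_q$. This blows up only when $j_k^{(n)}\to-\infty$ (spreading bumps); when $j_k^{(n)}\to+\infty$ (concentrating bumps) it tends to \emph{zero}, so the $L^q$-bound is perfectly compatible with nontrivial profiles at scales $j_k^{(n)}\to+\infty$ — this is exactly the standard concentration phenomenon at the critical exponent, and your claim that "either $j_k^{(n)}\to+\infty$ or $\to-\infty$ produces a term whose $L^q$-norm blows up" is false in the first case. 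Consequently your conclusion that the decomposition collapses to $u_k=w+r_k$ is not justified: in general $u_k=w+(\text{concentrating terms with }j_k^{(n)}\to+\infty)+r_k$.

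The paper closes this gap differently: it observes that the $L^q$-bound kills only the profiles with $j_k^{(n)}$ not tending to $+\infty$ (the bounded-scale ones reduce to the weak limit, taken to be $0$ without loss of generality), and then shows separately that the surviving concentrating terms \emph{vanish in the target norm} $L^{\sigma}(\R^{N},r_\gamma^{-(\sigma-q)(\frac{p}{p-q}-\frac{|\gamma|}{q})})$. This is where the strict inequality $\sigma<p$ is actually used: the target norm is a H\"older interpolation between the scale-invariant weighted $L^p$-norm (bounded along the concentrating sequence) and the $L^q$-norm (which, as computed above, tends to $0$ like $2^{-j_k^{(n)}}$), so each concentrating term contributes $o(1)$. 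You should replace the $L^q$-blowup argument for the $+\infty$ case with this subcriticality argument; with that repair, the remainder is handled by Theorem~\ref{leb} and your Step~3 finishes the proof as in the paper.
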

\begin{proof}
Let $u_k\rightharpoonup 0$ in ${H}_{\gamma}^{1,q}(\R^{|\gamma|})$ and apply Theorem~\ref{thm:pd}. Note that the additional $L^q$- bound on the sequence implies that $w^{(n)}=0$ unless $j_k^{(n)}\to +\infty$. Note also that all the concentration terms with  $j_k^{(n)}\to +\infty$ in  \eqref{eq:pd} vanish in $L^{\sigma}(\R^{|\gamma|},r_\gamma^{-(\sigma-q)(\frac{p}{p-q}-\frac{|\gamma|}{q}})$. 
Thus the renamed subsequence of $u_k$ is identified with the remainder term, which by Theorem~\ref{leb} vanishes in $L^p()$. The assertion of the theorem follows by interpolating using the H\"older inequality between $L^p(\R^{|\gamma|},r_\gamma^{-|\gamma|\frac{q-p}{q}+p})$ and $L^q(\R^{|\gamma|})$. \qed
\end{proof}

\section{Appendix}
Let us present an alternative proof for a particular case \eqref{eq:newHardy111} of the Hardy-type inequality \eqref{eq:newHardy1}. The reason for giving a second proof is that it provides additional information about the inequality \eqref{eq:newHardy111}, namely that it is not sharp, but there exists a
continuous positive biradial function $W(r_{1,}r_{2})$ on $\R^{4}\setminus\{r_{1}r_{2}=0\}$
such that $Q(u)\ge\int_{\R^{4}}W(r_{1},r_{2})u(r_{1},r_{2})^{2}dx$.
This follows from the Allegretto-Piepenbrink argument, since the proof is based on construction of a positive supersolution for the associated equation, which happens not to be a solution.  See \cite{PinTin,TakTin} for details and other possible forms of the remainder.

\begin{lem}
\label{lem:newHardy0}Let $m=2$, $\gamma_{1}=\gamma_{2}=2$, and $\alpha\in\R$.
There exists a constant $C_{1}>0$, $C_{1}\ge C_{0}(\alpha-1)^{2}$, 
such that for all $u\in C_{0\gamma}^{\infty}(\R^{4}\setminus Y(\gamma))$, 
\begin{equation}
\int_{\R^{4}}\frac{|\nabla u(x)|^{2}}{(r_{1}(x)r_{2}(x))^{\alpha-1}}dx-C_{1} 
\int_{\R^{4}} \frac{|u(x){}|^{2}}{(r_{1}(x)r_{2}(x))^{\alpha}} dx\ge 0. 
\label{eq:newHardy111}
\end{equation}
\end{lem}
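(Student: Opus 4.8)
The plan is to reduce the inequality to a one-dimensional spectral problem by exploiting the biradial structure, exactly as in the proof of Lemma~\ref{lem:newHardy}, but then to produce the constant $C_1$ and (for the Appendix's purpose) an explicit positive supersolution by an Allegretto--Piepenbrink-type argument. Since $\gamma_1=\gamma_2=2$ we have $|\gamma|-2=2$, so in the notation of Lemma~\ref{lem:newHardy} the weight exponents simplify: the left side carries $(r_1r_2)^{-(\alpha-1)}$ and the right side $(r_1r_2)^{-\alpha}$. Introducing polar coordinates $r_1=r\cos\theta$, $r_2=r\sin\theta$ on the quadrant and using $|\nabla u|^2\ge r^{-2}|\partial_\theta u|^2$ as in Lemma~\ref{lem:newHardy}, it suffices to prove, for each fixed $r>0$ and with $\psi(\theta)=\cos\theta\,\sin\theta=\tfrac12\sin 2\theta$,
\[
\int_0^{\pi/2}|\partial_\theta u|^2\,\psi(\theta)^{1-\alpha+1}\,d\theta \;\ge\; C_1\int_0^{\pi/2}|u|^2\,\psi(\theta)^{1-\alpha}\,d\theta,
\]
with $u(r,0)=u(r,\pi/2)=0$. (One must track the exponent bookkeeping: the Jacobian $\psi(\theta)$ appears, so the effective angular weights are $\psi^{2-\alpha}$ and $\psi^{1-\alpha}$; I would verify this factor carefully, since an off-by-one in the exponent is the easiest place to slip.)

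Next I would symmetrize. Since $\psi$ is symmetric about $\theta=\pi/4$ and attains its maximum there (here $\theta_\gamma=\pi/4$ because $\gamma_1=\gamma_2$), I split $(0,\pi/2)$ at $\pi/4$ and treat $(0,\pi/4)$; the other half is identical by $\theta\mapsto\pi/2-\theta$. On $(0,\pi/4)$, $\psi(\theta)=\sin\theta\cos\theta$ behaves like $\theta$ near $0$, so the substitution used in Lemma~\ref{lem:newHardy} turns the weighted one-dimensional inequality into a standard Hardy inequality on a half-line (or, when the relevant exponent hits the critical value, into the logarithmic Hardy inequality, as in the $\nu=-1$ discussion there). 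This yields $C_1>0$ and, tracking the Hardy constant through the change of variables, the quantitative bound $C_1\ge C_0(\alpha-1)^2$ for large $|\alpha|$: the shift by $1$ (rather than a pure $\alpha^2$) comes precisely from the fact that the gradient weight is $(r_1r_2)^{-(\alpha-1)}$, i.e. the exponent $\alpha$ in Lemma~\ref{lem:newHardy} is replaced by $\alpha-1$ here.

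For the sharper statement promised in the Appendix, rather than invoking Lemma~\ref{lem:newHardy} as a black box I would instead exhibit an explicit positive supersolution $v(\theta)$ of the one-dimensional Euler--Lagrange equation $-\big(\psi^{2-\alpha}v'\big)' = C_1\,\psi^{1-\alpha}v$ on $(0,\pi/4)$ which is \emph{not} a solution, e.g. a power $v(\theta)=\psi(\theta)^{a}$ with a suitably chosen exponent $a=a(\alpha)$; substituting gives an ODE identity with a genuine slack term, and the Allegretto--Piepenbrink / ground-state-substitution trick ($u=vw$, integration by parts) converts that slack into the claimed nonnegative remainder $\int_{\R^4}W(r_1,r_2)u^2\,dx$ with $W>0$. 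The main obstacle is the constant tracking: getting the exponent $a$ right so that the supersolution inequality is strict, and simultaneously extracting the lower bound $C_1\ge C_0(\alpha-1)^2$ with the correct shift — the bookkeeping of the three competing powers of $\psi$ (from the gradient weight, the potential weight, and the Jacobian) is where care is needed, whereas the Hardy/ODE machinery itself is routine.
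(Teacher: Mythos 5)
Your first route (pass to polar coordinates on the quadrant, discard $|\partial_r u|^2$, and prove the resulting angular inequality with weights $\psi^{2-\alpha}$ and $\psi^{1-\alpha}$ by a one--dimensional Hardy inequality after splitting at $\theta_\gamma=\pi/4$) is correct, and your exponent bookkeeping checks out; but it is exactly the specialization of Lemma~\ref{lem:newHardy} (with its parameter equal to $2(\alpha-1)$, not $\alpha-1$ --- harmless, since only the quadratic dependence matters). The paper states explicitly that the Appendix gives an \emph{alternative} proof of this special case, precisely because the Hardy-reduction route only yields $C_1\ge C_0(\alpha-1)^2$ for large $\alpha$ and, more importantly, does not produce the positive remainder $W(r_1,r_2)$. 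So the part of your proposal that is supposed to carry the real content is the supersolution construction, and there you deviate from the paper in a way that breaks.

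The paper's supersolution is $u(r,\theta)=r^{\alpha-2}(\sin 2\theta)^{(\alpha-1)/2}$ (with a logarithmic correction at $\alpha=1$), and the Allegretto--Piepenbrink argument is applied to the \emph{full two-dimensional} operator on the quadrant, not to the angular operator at fixed $r$. The radial factor is not decoration: plugging $r^{\alpha-2}g(\theta)$ into the radial part of the operator produces the extra positive term $(\alpha-2)^2\,r^{\alpha-4}g$, and after dividing out one finds that the supersolution inequality reduces to $(\alpha-1)^2(\sin 2\theta)^{-1}+\sin 2\theta\ge C$, which holds for all $\alpha\neq 1$ with $C$ a multiple of $(\alpha-1)^2$. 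If instead you first throw away $|\partial_r u|^2$ and look for a purely angular power supersolution $v=\psi^a$ of $-(\psi^{2-\alpha}v')'\ge C\psi^{1-\alpha}v$ on $(0,\pi/2)$, the pointwise condition becomes $-a(a+1-\alpha)\psi^{-1}\cos^2 2\theta+4a\ge C$; at $\theta=\pi/4$ this forces $a>0$, while boundedness as $\theta\to 0$ forces $a\le\alpha-1$. Hence no admissible power exists for $\alpha\le 1$, and even for $\alpha>1$ the value at $\theta=\pi/4$ caps the certified constant at $4a\le 4(\alpha-1)$, linear rather than quadratic in $\alpha$. So your Step 2, as designed, cannot deliver the lemma for all $\alpha\in\R$ nor the bound $C_1\ge C_0(\alpha-1)^2$; the missing idea is to keep the radial variable in the supersolution and let the one-dimensional radial Hardy term $(\alpha-2)^2$ supply the positivity that the angular operator alone lacks at $\theta=\pi/4$.
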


\begin{proof}
The function $u$ is block-radial, therefore $|\nabla_x u(x)| = |\nabla_{r_1,r_2} u(r_1,r_2)|$ and the inequality \eqref{eq:newHardy111} is equivalent to 
\begin{equation}
Q(u)=\int_0^\infty\int_0^\infty\frac{|\nabla u(r_1,r_2)|^{2}}{(r_{1}r_{2})^{\alpha-2}}dr_1 dr_2 -C_1
\int_0^\infty\int_0^\infty\frac{|u(r_1,r_2){}|^{2}}{(r_{1}r_{2})^{\alpha-1}} dr_1 dr_2 \ge 0. 
\label{eq:newHardy1a1}
\end{equation}

By the well-known argument, based on the ground state transform (\cite[Corollary 2.4]{LSV},
see also \cite{MP}, \cite{Pin} or  \cite[Theorem 8.3.4]{Davies}), it suffices to find a supersolution
to the elliptic equation corresponding to the quadratic form $Q$ in a domain $(0,\infty)^{2}\subset \R^2$.  
We consider this equation in the polar
coordinates $(r,\theta)$, $r>0$, $0<\theta<\frac{\pi}{2}$ of the quadrant $(r_{1},r_{2})\in(0,\infty)^{2}$, that is, $r_{1}=r\cos\theta$
and $r_{2}=r\sin\theta$:
\begin{equation}
-r^{2\alpha-5}\partial_{r}(r^{5-2\alpha}\partial_{r}u)-r^{-2}(\sin2\theta)^{\alpha-2}\partial_{\theta}((\sin2\theta)^{2-\alpha}\partial_{\theta}u)\ge Cr^{-2}(\sin2\theta)^{-1}u.\label{eq:supersol}
\end{equation}

A supersolution can be then given as 
\begin{equation}
u(r,\theta)=\begin{cases}
r^{-1}\sqrt{\log\left(\frac{1}{\sin2\theta}\right)}, & \alpha=1\\
r^{\alpha-2}(\sin2\theta)^{\text{\ensuremath{\frac{\alpha-1}{2}}}}, & \alpha\neq1
\end{cases}\label{eq:supersolution}
\end{equation}
and trivial but tedious calculations yield (\ref{eq:supersol}) with
$C$ equal to a constant multiple of $(\alpha-1)^{2}$ when $\alpha\neq1$. \qed
\end{proof}

\begin{remark}
The heuristic considerations that led to proposing the supersolution
(\ref{eq:supersolution}) are based on analogy with the supersolution
(which is also a solution) $u=r^{\frac{2-n}{2}}$ to the Euler-Lagrange equation associated with the radial Hardy
inequality 
\[
-\frac{1}{r^{n-2}}\partial_{r}(r^{n-1}\partial_{r}u)=\left(\frac{n-2}{2}\right)^{2}\frac{u}{r^{2}}
\]
with $5-2\alpha$ and $2-\alpha$ taking place of $n-1$, respectively
for the radial and the angular variable. In case $\alpha=1$ resp.
$n=2$ the choice of the angular part of the supersolution follows
the ground state for the Leray inequality (\cite{Leray}).
\end{remark}

%%%%%%%%%%%%%%%%%%%%%%%%%%%%%%%%%%%%%%%%%%%%%%%%%%
%%%%%%%%%%%%%%%%%%%%%%%%%%%%%%%%%%55

\end{document}